 \newtheorem{theorem}{Theorem}[section]
\newtheorem{lemma}[theorem]{Lemma}
\newtheorem{corollary}[theorem]{Corollary}
\newtheoremstyle{defstyle}
  {.6em} 
  {.1em} 
  {} 
  {} 
  {\bfseries} 
  {.} 
  {.5em} 
  {} 
\theoremstyle{defstyle} \newtheorem{definition}[theorem]{Definition}
\newtheorem{example}[theorem]{Example}
\theoremstyle{remark}
\newtheorem{remark}[theorem]{Remark}
\numberwithin{equation}{section}
\numberwithin{figure}{section}
\newcommand{\NN} {\mathbb{N}}
\newcommand{\QQ} {\mathbb{Q}}
\newcommand{\RR} {\mathbb{R}}
\newcommand{\CC} {\mathbb{C}}
\newcommand {\foD}  {\mathfrak{D}}
\newcommand {\fod}  {\mathfrak{d}}
\newcommand {\coker} {\operatorname{coker}}
\newcommand {\Hom}  {\operatorname{Hom}}
\newcommand {\Int}  {\operatorname{Int}}
\renewcommand {\ker } {\operatorname{ker}}
\newcommand {\kk} {\Bbbk}
\newcommand {\scrP}  {\mathscr{P}}
\newcommand {\Sing} {\operatorname{Sing}}
\newcommand {\Supp} {\operatorname{Supp}}
\def\virt{\mathrm{vir}}
\def\PP{\mathbb{P}}
\def\Z{\mathbb{Z}}
\def\Q{\mathbb{Q}}
\def\G{\mathsf{G}}
\begin{document}

\title[Quivers and curves in higher dimension]{Quivers and curves in higher dimension}

\author[H.\,Arg\"uz]{H\"ulya Arg\"uz}
\address{University of Georgia, Department of Mathematics, Athens, GA 30605}
\email{Hulya.Arguz@uga.edu}

\author[P.\,Bousseau]{Pierrick Bousseau}
\address{University of Georgia, Department of Mathematics, Athens, GA 30605}
\email{Pierrick.Bousseau@uga.edu}

\date{}

\begin{abstract}
We prove a correspondence between Donaldson--Thomas invariants of quivers with potential having trivial attractor invariants and genus zero punctured Gromov--Witten invariants of holomorphic symplectic cluster varieties. The proof relies on the comparison of the stability scattering diagram, describing the wall-crossing behavior of Donaldson--Thomas invariants, with a scattering diagram capturing punctured Gromov--Witten invariants via tropical geometry.
\end{abstract}

\maketitle

\setcounter{tocdepth}{1}
\tableofcontents
\setcounter{section}{-1}
\section{Introduction}

\subsection{Overview}
Donaldson--Thomas (DT) invariants are counts of stable objects in a triangulated category $\mathcal{C}$ which is Calabi--Yau of dimension 3  \cite{donaldson1998gauge, JoyceSong, kontsevich2008stability, MR1818182}, and have important applications in physics, in geometry and in representation theory. In  physics, in particular in quantum field theory and string theory, DT invariants play an important role as counts of BPS states and D-branes \cite{MR2567952}. From the geometric point of view, particularly interesting situations are when $\mathcal{C}$ is the derived category of coherent sheaves \cite{MR1818182} or the Fukaya category of a Calabi--Yau 3-fold \cite{MR1941627, MR1957663}. In the context of representation theory, quivers with potentials \cite{MR2480710} provide a natural source of examples of Calabi--Yau categories of dimension three \cite{ginzburg2006calabi-yau, MR2484733}.
Due to its more algebraic nature, DT theory of quivers with potentials provides an ideal framework to study and explore many questions, which are also of interest in the geometric counterpart of DT theory.

Stable objects in a triangulated category are defined with respect to a Bridgeland stability condition on this category \cite{MR2373143}, and 
DT invariants are piecewise-constant with respect to the choice of stability condition:
they are constant in the complement of
countably many real codimension one loci in the space of stability conditions called walls, but they
jump discontinuously in general when the stability condition crosses a wall. The jumps of DT invariants across walls in the space of stability conditions are given by a universal wall-crossing formula due to Joyce--Song \cite{JoyceSong}
and 
Kontsevich--Soibelman \cite{kontsevich2008stability}.

Remarkably, a very similar wall-crossing formula appears in the a priori very different context of counting holomorphic disks in mirror symmetry \cite{auroux2007mirror,GSannals,kontsevich2008stability}. This observation leads to the surprising expectation that in many cases DT invariants may be equal to counts of holomorphic curves, such as log and punctured Gromov--Witten invariants of Abramovich--Chen--Gross--Siebert \cite{logGWbyAC, ACGS, logGW}, which appear in the algebro-geometric mirror construction of Gross--Siebert \cite{gross2021canonical}. Therefore, one naturally expects DT invariants to be related in some situations to counts of holomorphic curves in holomorphic symplectic varieties \cite{bousseau2022holomorphic,KS, lu2010instanton}. 
We show that this expectation holds in the context of DT invariants of quivers with potentials. In particular, we prove a correspondence between quiver DT invariants and punctured log Gromov--Witten invariants of holomorphic symplectic cluster varieties.

\subsection{Background and main result}
We state our main result after a quick review of quiver DT and punctured Gromov--Witten invariants.
\subsubsection{Quiver DT Invariants}
A quiver with potential $(Q,W)$ is given by a finite oriented graph $Q$, and a finite formal linear combination $W$ of oriented cycles in $Q$. 
Given a quiver with potential $(Q,W)$, with set of vertices $Q_0$, one can define 
a DT invariant $\Omega_\gamma^{+,\theta} \in \Z$ for every dimension vector $\gamma \in N_Q:=\bigoplus_{i\in Q_0}\Z s_i$ and every stability parameter \[ \theta \in \gamma^\perp \subset M_{Q,\RR}:=\Hom(N_Q,\RR)\,,\] as reviewed in \S\ref{sec_quiver_dt}. The dependence of $\Omega_\gamma^{+,\theta}$ on the stability parameter $\theta$ is captured by a universal wall-crossing formula \cite{JoyceSong,kontsevich2008stability}. 
It follows from the wall-crossing formula that general DT invariants $\Omega_\gamma^{+,\theta}$ are determined by particular DT invariants $\Omega_\gamma^{+,\star}$ called \emph{attractor DT invariants}, which are defined for a stability parameter $\theta$ close to the attractor point $\iota_\gamma \omega_Q =\omega_Q(\gamma, -) \in \gamma^\perp$, where $\omega_Q$ is the skew-symmetric form on $N_Q$ obtained by skew-symmetrization of the Euler form -- see \eqref{Eq: Euler form} \cite{AlexandrovPioline, KS, mozgovoy2020attractor}.

When $\Omega_{s_i}^{+,\star}=1$ for all $i\in Q_0$, and $\Omega_\gamma^{+,\star}=0$ unless $\gamma=s_i$ for some $i$ or $\gamma \in \ker \omega_Q$, we say that $(Q,W)$ has \emph{trivial attractor DT invariants}. 
As reviewed in Examples \ref{ex_1}-\ref{ex_2}, this condition is known to hold for many quivers with potentials of interest in representation theory and geometry. It is also expected to be a general property of quivers with potentials describing the derived category of coherent sheaves on non-compact Calabi--Yau 3-folds.

Throughout this paper we frequently consider rational DT invariants $\overline{\Omega}_\gamma^{+,\theta}$, which are a repackaging of the integer DT invariants $\Omega_\gamma^{+,\theta}$ defined by a universal formula -- see \eqref{eq_dt_rational} for details.

\subsubsection{Punctured Gromov--Witten Invariants}
Given a pair $(X,D)$ consisting of a smooth projective variety $X$ over $\mathbb{C}$ and a normal crossing divisor on $X$, punctured Gromov--Witten invariants of $(X,D)$ are virtual counts of curves in $X$ with prescribed tangency conditions along $D$ \cite{ACGS}. In general, such curves might have components contained in $D$ and logarithmic geometry is needed to make sense of the contact orders with $D$ \cite{logGWbyAC,logGW}. For general log Calabi-Yau pairs $(X,D)$, Gross--Siebert define particular genus zero punctured Gromov--Witten invariants, which play an important role in their general construction of mirrors, and which can be viewed as an algebro-geometric definition of counts of Maslov index zero holomorphic disks in the non-compact Calabi-Yau variety $U=Y\setminus D$ \cite{gross2021canonical}. 

We consider these invariants for particular log Calabi-Yau pairs $(X,D)$. A \emph{symplectic seed} $\mathbf{s}=(N,(e_i)_{i\in I},\omega)$ consists of a finite rank abelian group $N$, finitely many elements $e_i \in N$, and a skew-symmetric form $\omega$ on $N$, which is non-degenerate over $\Q$. We further assume that $v_i :=\iota_{e_i}\omega \in M$ is primitive -- see \eqref{eq_cluster_assumption}, where $M= \mathrm{Hom}(N,\mathbb{Z})$ is dual abelian group. Given a symplectic seed $\mathbf{s}$, one first considers a toric variety $X_\Sigma$ with fan $\Sigma$ in $M_{\RR} = M \otimes_{\mathbb{Z}} \RR = \mathrm{Hom}(M,\RR)$ containing the rays $\RR_{\geq 0}v_i$, and then one defines a pair $(X,D)$, where $X$ is the blow up of $X_\Sigma$ along the loci of equation $1+z^{e_i}$ in the toric divisor corresponding to the ray $\RR_{\geq 0}  v_i$, and where $D$ is the strict transform of the toric boundary divisor of $X_\Sigma$. Under an appropriate assumption -- see \eqref{eq_H_int}, $X$ is smooth, and so $(X,D)$ is a log Calabi-Yau pair. Moreover, $\omega$ induces an holomorphic symplectic form on the complement $U=Y \setminus D$. We refer to $U$ as the \emph{cluster variety} defined by $\mathbf{s}$, and to $(X,D)$ as a \emph{log Calabi-Yau compactification of the cluster variety}.

Following \cite{gross2021canonical}, one can define a genus zero punctured Gromov--Witten invariant $N_{\tau,\beta}^{(X,D)} \in \Q$ of $(X,D)$ for every curve class $\beta$ and every combinatorial choice of a so-called \emph{wall type}. 
The invariant $N_{\tau,\beta}^{(X,D)}$ is a virtual count of rational curves in $(X,D)$ of class $\beta$ and whose combinatorics of intersections with the strata of $D$ is constrained by $\tau$ -- see \S \ref{sec_gw} for details.

\subsubsection{Main result}
We prove a correspondence between the DT invariants $\Omega_\gamma^{+,\theta}$ of a quiver with potential $(Q,W)$
having trivial attractor DT invariants, and the punctured Gromov--Witten invariants $N_{\tau,\beta}^{(X,D)}$ of a log Calabi-Yau compactification $(X,D)$ of the cluster variety defined by a symplectic seed $\mathbf{s}$ ``compatible'' with $Q$. Here, the compatibility 
between a quiver $Q$ and a symplectic seed $\mathbf{s}=(N,(e_i)_{i\in I},\omega)$ is the data of linear map $\psi: N_Q \rightarrow N$ such that $\psi(s_i)=e_i$, $\omega_Q=\psi^\star \omega$, and 
$\psi \otimes \Q$ is surjective -- see \S \ref{sec_compatibility} for details. If $Q$ and $\mathbf{s}$ are compatible, then the holomorphic symplectic cluster variety $U=X \setminus D$ is a finite quotient of a symplectic fiber of the Poisson $\mathcal{X}$ cluster variety defined by $Q$ \cite{FG2, GHKbirational} -- see Remark \ref{remark_finite_group} for details. In particular, the dimension of $U$ and $X$ is equal to the rank of the skew-symmetrized Euler form $\omega_Q$ of $Q$.
Our main result, Theorem \ref{thm_main}, states:

\begin{theorem} \label{thm_main_intro}
Let $(Q,W)$ be a quiver with potential having trivial attractor DT invariants, $\mathbf{s}=(N,(e_i)_{i\in I}, \omega)$ a symplectic seed, and $(X,D)$ a log Calabi-Yau compactification of the corresponding cluster variety satisfying assumptions \eqref{eq_cluster_assumption} and \eqref{eq_H_int}. Fix a compatibility data $\psi \colon N_Q \rightarrow N$ between $\mathbf{s}$ and $Q$ and let $\gamma \in N_Q \setminus \ker \omega_Q$ be a dimension vector such that $\gamma \notin \Z_{\geq 1}s_i$ for all $i \in I$. Denote by $\psi^{\vee}: M_{\RR} \to M_{Q,\RR}$ the induced map between the dual vector spaces. Then, for every general stability parameter $\theta \in \psi^\vee (M_\RR) \cap \gamma^{\perp}\subset M_{Q,\RR}$ and point $x \in M_\RR$ such that $\psi^\vee(x)=\theta$, there exists a set of wall types $\mathcal{T}_\gamma^x$ and a curve class $\beta_\gamma^x$ described in \S\ref{sec_dt_gw}, such that we have the correspondence
\begin{equation} \label{eq_main_intro} 
\overline{\Omega}_\gamma^{+,\theta} 
= \frac{1}{|\gamma|}\sum_{\tau \in \mathcal{T}_\gamma^x} k_\tau N_{\tau,\beta_\gamma^x}^{(X,D)} \,,\end{equation}
between the rational DT invariants $\overline{\Omega}_\gamma^{+,\theta}$ of $(Q,W)$, and the genus zero punctured Gromov--Witten invariants 
   $N_{\tau,\beta_\gamma^x}^{(X,D)}$ of $(X,D)$, where $|\gamma|$ is the divisibility of $\gamma$ in $N_Q$,
   and
   the coefficient $k_\tau$ is a positive integer defined in \eqref{eq_coeff}.
\end{theorem}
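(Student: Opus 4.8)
The plan is to compare two scattering diagrams living in the same space $M_{Q,\RR}$ (or rather in the hyperplane $\psi^\vee(M_\RR)$): the \emph{stability scattering diagram} $\Scatter^{\mathrm{DT}}$ of $(Q,W)$, whose wall-crossing functions encode the rational DT invariants $\overline{\Omega}_\gamma^{+,\theta}$ via the Kontsevich--Soibelman/Joyce--Song formula, and a \emph{canonical scattering diagram} $\Scatter^{\mathrm{GW}}$ attached to the log Calabi--Yau pair $(X,D)$ in the sense of Gross--Siebert, whose wall-crossing functions are generating series of the punctured Gromov--Witten invariants $N_{\tau,\beta}^{(X,D)}$. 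The first reduction is to show that the compatibility data $\psi$ induces a map of scattering diagrams: one pushes forward $\Scatter^{\mathrm{DT}}$ along $\psi$ (equivalently, restricts attention to the sub-diagram supported on $\psi^\vee(M_\RR)$) and identifies its initial walls with those of $\Scatter^{\mathrm{GW}}$. The initial walls on the DT side are the hyperplanes $s_i^\perp$ carrying the generating function built from $\Omega_{s_i}^{+,\star}=1$ together with the $\ker\omega_Q$-contributions; on the GW side they are the walls $v_i^\perp$ coming from the blow-up of $X_\Sigma$ along $1+z^{e_i}$. The triviality-of-attractor-invariants hypothesis is exactly what makes these two sets of initial data match after pushforward, because it collapses the DT initial data to the ``cluster'' form.

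The core of the argument is then a uniqueness statement: a scattering diagram is determined by its initial walls, so once the initial data agree, the consistent completions agree. I would invoke the Kontsevich--Soibelman consistency theorem (or its refinement in the form used by Gross--Pandharipande--Siebert / Gross--Hacking--Keel) to conclude $\psi_*\Scatter^{\mathrm{DT}} = \Scatter^{\mathrm{GW}}$ as consistent scattering diagrams, up to equivalence. Concretely this means that for each ray/wall in $M_\RR$ the product of the DT wall-functions over all preimage walls equals the product of the GW wall-functions. The next step is bookkeeping: extract from the equality of wall-functions along the ray through $x$ the coefficient of the monomial $z^\gamma$ (more precisely $z^{\psi(\gamma)}$ graded by the appropriate curve class). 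On the DT side this coefficient is, by the log/exponential repackaging, essentially $\overline{\Omega}_\gamma^{+,\theta}$ times a combinatorial factor involving the divisibility $|\gamma|$. On the GW side, by the tropical correspondence theorem of Gross--Siebert (wall-functions are sums over tropical disk/wall types $\tau$ of $k_\tau N_{\tau,\beta}^{(X,D)} z^{\beta}$), it is $\frac{1}{|\gamma|}\sum_{\tau\in\mathcal{T}_\gamma^x} k_\tau N_{\tau,\beta_\gamma^x}^{(X,D)}$, once one defines $\mathcal{T}_\gamma^x$ to be precisely the set of wall types whose asymptotic direction and attached monomial match $(\gamma,x)$, and $\beta_\gamma^x$ the corresponding curve class. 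Matching the two yields \eqref{eq_main_intro}. The hypotheses $\gamma\notin\ker\omega_Q$ and $\gamma\notin\Z_{\geq1}s_i$ ensure that the relevant wall is not one of the initial walls and that $\psi(\gamma)\neq 0$, so the comparison is non-degenerate and the point $x$ with $\psi^\vee(x)=\theta$ actually lies on a wall of $\Scatter^{\mathrm{GW}}$.

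The main obstacle I expect is establishing the identification of the GW wall-functions with generating series of punctured invariants in a form precise enough to read off individual $\gamma$-coefficients, i.e.\ setting up the tropical-to-punctured-GW dictionary with the exact combinatorial factors $k_\tau$ and the exact curve class $\beta_\gamma^x$, and checking that the Gross--Siebert canonical scattering diagram for \emph{this} blow-up presentation of $(X,D)$ is the consistent completion of the expected initial data (this requires controlling the walls emanating from the centers of blow-up and the possible components of curves contained in $D$). A secondary difficulty is making the pushforward $\psi_*$ well-defined and compatible with consistency when $\psi$ is only surjective after $\otimes\Q$, so that $\ker\omega_Q$ and torsion phenomena must be handled — this is where the $\ker\omega_Q$ clause in the definition of trivial attractor invariants does real work, guaranteeing that those classes contribute only ``central'' wall-functions that are transported correctly. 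Once these two points are in place, the remainder is the combinatorial extraction described above and is routine.
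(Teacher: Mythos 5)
Your proposal is essentially the paper's argument: show the stability scattering diagram of $(Q,W)$ agrees with the cluster scattering diagram (this is where trivial attractor invariants enter), transport it to $M_\RR$ via the compatibility data $\psi$, identify the result with the HDTV scattering diagram of $(X,D)$ by matching initial walls and invoking uniqueness of consistent completions, and then read off the coefficient of $z^\gamma$. The one clarification worth recording is that the tropical-to-punctured-GW dictionary you flag as the ``main obstacle'' --- identifying the wall functions of the combinatorial scattering diagram in $M_\RR$ with generating series of punctured Gromov--Witten invariants $N_{\tau,\beta}^{(X,D)}$ with precisely the multiplicities $k_\tau$ --- is not reestablished here but cited from Arg\"uz--Gross \cite{HDTV}; likewise, the transport of the DT scattering diagram from $M_{Q,\RR}$ to $M_\RR$ is more precisely a pullback along the injection $\psi^\vee:M_\RR\hookrightarrow M_{Q,\RR}$ equipped with extra bookkeeping variables $t_i$ tracking curve classes, rather than a literal restriction to a sub-diagram, and the central walls (directions in $\ker\omega_Q$) must be discarded before the pullback since they act trivially on path-ordered automorphisms.
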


The proof of Theorem \ref{thm_main_intro} is based on a comparison between the stability scattering diagram for DT invariants \cite{Bridgeland} and the explicit description, due to Mark Gross and the first author \cite{HDTV}, of the canonical scattering diagram capturing punctured Gromov--Witten invariants of log Calabi-Yau pairs obtained as blow-ups of toric varieties.

Using Theorem \ref{thm_main_intro},
    one can interpret the integer DT invariants 
    $\Omega_\gamma^{+,\theta}$ as BPS invariants underlying the rational punctured Gromov--Witten invariants $N_{\tau,\beta}^{(X,D)}$. 
    Under the assumptions of Theorem \ref{thm_main_intro}, we will also show that the DT invariants $\Omega_\gamma^{+,\theta}$
are non-negative integers -- see Theorem \ref{thm_nice_dt}. Both the integrality and the positivity of $\Omega_\gamma^{+,\theta}$ are highly non-trivial from the point of view of punctured Gromov--Witten theory.

While Theorem \ref{thm_main_intro} is about DT invariants of quivers with potentials, it can sometimes be applied to geometrically defined DT invariants when the derived category of coherent sheaves of a Calabi-Yau 3-fold admits a description in terms of a quiver with potential. We give an explicit example of such an application to the DT invariants of local $\PP^2$ in Theorem \ref{thm_localP2}.

Theorem \ref{thm_main_intro} relates all DT invariants of a quiver with potential, and so of a Calabi-Yau category of dimension 3, to punctured Gromov--Witten invariants of a log Calabi--Yau compactification of a holomorphic symplectic cluster variety of dimension equal to the rank of the skew-symmetrized Euler form. In particular, this result is very different from the MNOP correspondence \cite{MNOP1} which relates DT counts of ideal sheaves and Gromov--Witten invariants of the same 3-fold.

\subsection{Related works}

\subsubsection*{The tropical vertex} 
The first example of the general correspondence given in Theorem \ref{thm_main_intro} was obtained by Gross--Pandharipande \cite{gross2010quivers}, following previous work of Gross--Pandharipande--Siebert on the tropical vertex \cite{GPS}: they proved a correspondence between DT invariants of the $m$-Kronecker quivers, consisting of two vertices connected by $m$ arrows, and log Gromov--Witten invariants of log Calabi--Yau surfaces. 
This correspondence was generalized by Reineke--Weist to the case of complete bipartite quivers under the name of refined Gromov--Witten/Kronecker correspondence \cite{MR3033514, MR3004575}, and then to arbitrary acyclic quivers with skew-symmetrized Euler forms of rank two by the second author \cite[\S 8.5]{bousseau2020quantum}. Theorem \ref{thm_main_intro} generalize these results to higher dimension. Actually, even in dimension two, Theorem \ref{thm_main_intro} is broader than previously known results as it can be applied to non-acyclic quivers of rank two having trivial attractor DT invariants. We give examples of such application in 
\S \ref{sec_ex_local}-\ref{sec_ex_cubic}. 
There also exist a refined DT/ higher genus GW generalization of the Gromov--Witten/Kronecker correspondence \cite[\S 8.5]{bousseau2020quantum} and extensions to different geometries in dimension two \cite{bousseau2018example,reineke2021moduli},  and it is an interesting question to find out if they admit higher dimensional generalizations.

Finally, there should exist analogues of Theorem \ref{thm_main_intro} in the context of geometric DT invariants of non-compact Calabi--Yau 3-folds. Such a correspondence is proved in \cite{Bp2} for geometric DT invariants of the local projective plane. 

\subsubsection*{Knots-quivers correspondence} An a priori different relation between quiver DT invariants and open Gromov--Witten theory has been explored in the context of the knots-quivers correspondence \cite{MR4156213}. This correspondence involves DT invariants of symmetric quivers, and so in particular with zero skew-symmetrized Euler form, whereas Theorem \ref{thm_main_intro} deals with the complementary case of dimension vectors which are not in the kernel of the skew-symmetrized Euler form. 
However, the symmetric quivers constructed in the knots-quiver correspondence appear to share the same geometric origin as the quivers of the present article, the former being built from basic disks and linking numbers between them, the latter from disks created from non-toric blow-ups.
It is a very interesting question to understand what is the precise relation between Theorem \ref{thm_main_intro} and the correspondence in \cite{MR4156213}.

\subsubsection*{Quivers, flow trees and log curves}
A correspondence between quiver DT invariants and counts of tropical curves has been established by Cheung--Mandel \cite{mandel2020disks} and by the authors in their proof of the flow tree formula expressing general DT invariants in terms of attractor DT invariants \cite{ABflow}.
Using a tropical/log Gromov--Witten correspondence, it was then proved by the authors that the universal coefficients appearing in the flow tree formula are log Gromov--Witten invariants of toric varieties \cite{ABflowquiver}. The toric varieties considered in \cite{ABflowquiver} have dimension equal to the number of vertices of $Q$, but it follows from the tropical/log Gromov--Witten correspondence that the same log Gromov--Witten invariants can be obtained from toric varieties of dimension equal to the rank of $\omega_Q$, which is also the dimension of the cluster varieties considered in this paper.
For quivers with potentials having trivial attractor DT invariants, the compatibility between Theorem \ref{thm_main_intro} and the main result of \cite{ABflowquiver} should have a geometric interpretation as a degeneration formula in Gromov--Witten theory for the degeneration of the log Calabi--Yau compactification of the cluster variety to a toric variety and other simpler pieces, as studied in 
\cite{HDTV,GPS}. In the two-dimensional case, this degeneration formula is studied in \cite{GPS} and its interpretation in terms of quiver DT invariants is discussed in \cite{MR3033514}.

\subsection{Acknowledgments} 
The research of Hülya Argüz was partially supported by the NSF grant DMS-2302116. The research of Pierrick Bousseau was partially supported by the NSF grant DMS-2302117. Final parts of this paper were completed during the ``Inaugural Simons Math Summer Workshop'' at the Simons Center for Geometry and Physics, organized by Mark Gross and Mark McLean. We also thank the anonymous referee for their careful reading and the many suggestions to improve the exposition.

\section{Donaldson--Thomas invariants of quivers}

\subsection{Quiver DT invariants} 
\label{sec_quiver_dt}

A \emph{quiver} $Q$ is a finite oriented graph. We denote by 
$Q_0$ the set of vertices of $Q$, and $Q_1$ the set of oriented edges of $Q$, referred to as arrows.  
We set
\[N_Q:=\Z^{Q_0}=\bigoplus_{i \in Q_0}\Z s_i\,.\]
We denote by $M_Q :=\Hom (N_Q,\Z)$ the dual lattice to $N_Q$, and  
\[ M_{Q,\RR}:= M_Q \otimes_\Z \RR =\Hom (N_Q,\RR)\,, \]
the associated dual vector space.

\begin{definition}
A \emph{representation} of a quiver $Q$, denoted by
\[V=(\{V_i\}_{i\in Q_0},\{f_{\alpha}\}_{\alpha \in Q_1}) \,,\]
is an assignment of a finite-dimensional vector space $V_i$ over $\CC$ for each vertex $i \in Q_0$, and a $\CC$-linear map $f_{\alpha} \in \mathrm{Hom}(V_i,V_j)$ for each arrow $(\alpha:
i \rightarrow j) \in Q_1$. The \emph{dimension vector} associated to a quiver representation is the vector
\[\gamma = (\gamma_i)_{i \in Q_0} \in N_Q\,,\] 
where $\gamma_i :=\dim V_i$.
\end{definition}

We have the following notion of stability due to King \cite{MR1315461}.
\begin{definition}[King's stability]
Let $V$ be a quiver representation with associated dimension vector $\gamma \in N_Q$. A \emph{stability parameter} for $\gamma$ is a point \[ \theta \in \gamma^{\perp}:=\{ \theta \in M_{Q,\RR}\,, \theta(\gamma)=0\} \subset M_{Q,\RR}\,.\] 
The representation $V$ is $\theta$-\emph{stable} (resp.\ $\theta$-\emph{semistable}) if for all 
non-zero strict subrepresentation $V'$ of $V$ we have $\theta(\mathrm{dim}(V')) < 0 $ (resp.\ $\theta(\mathrm{dim}(V')) \leq 0 $).
\end{definition}

For a dimension vector $\gamma \in N_Q$, we say a stability parameter $\theta \in \gamma^\perp$ is \emph{$\gamma$-general} if $\theta(\gamma')=0$ implies $\gamma'$ collinear with $\gamma$. For $\gamma \in N_Q$ and a  $\gamma$-general stability parameter $\theta \in M_{Q,\RR}$, the moduli space $\mathcal{M}_{\gamma}^{\theta}$ of S-equivalence classes of $\theta$-semistable quiver representations of $Q$ dimension $\gamma$  is a quasi-projective variety over $\CC$, 
which is constructed via geometric representation theory -- see \cite{MR1315461}.  When $Q$ is acyclic, $\mathcal{M}_{\gamma}^{\theta}$ is projective. In general, the choice of a \emph{potential} $W=\sum_c \lambda_c c$, that is, of a finite linear combination of oriented cycles $c$ in $Q$ and with coefficients $\lambda_c \in \CC$, defines a \emph{trace function} $ \mathrm{Tr}(W)_{\gamma}^{\theta}:  \mathcal{M}_{\gamma}^{\theta} \to \CC $ on each of the moduli spaces 
$\mathcal{M}_{\gamma}^{\theta}$: if $c$ is an oriented cycle of arrows $\alpha_1,\dots,\alpha_n$ of $Q$, we define
\begin{align}
\nonumber
\mathrm{Tr}(c)_{\gamma}^{\theta}:  \mathcal{M}_{\gamma}^{\theta} & \longrightarrow \CC \\ 
\nonumber
     V = (V_i, f_{\alpha}) & \longmapsto \mathrm{Tr}(f_{\alpha_n} \circ  \ldots \circ f_{\alpha_1} )
\nonumber
\end{align}
and we set
$$\mathrm{Tr}(W)_{\gamma}^{\theta}  = \sum_{c} \lambda_c \mathrm{Tr}(c)_{\gamma}^{\theta} \,\ .$$

Quiver DT invariants of a quiver with potential $(Q,W)$ are then defined as follows. Fix a dimension vector $\gamma \in N_Q$, and a 
$\gamma$-general stability parameter $\theta \in \gamma^\perp$. The DT (Donaldson--Thomas) invariant, denoted by \[\Omega_\gamma^{+,\theta} \in \Z,\] 
is an integer corresponding to the virtual count of the critical points of the trace function $\mathrm{Tr} (W)_\gamma^\theta$ on the moduli space $\mathcal{M}_{\gamma}^{\theta}$ of $\theta$-semistable representations of dimension $\gamma$. If the $\theta$-stable locus in $\mathcal{M}_{\gamma}^{\theta}$ is empty, we have $\Omega_\gamma^{+,\theta}=0$.
Else, $\Omega_\gamma^{+,\theta}$ is 
defined as
\begin{equation} \label{eq_dt}
\Omega_\gamma^{+,\theta}
=e(\mathcal{M}_\gamma^\theta, \phi_{\mathrm{Tr}(W)_\gamma^\theta}(IC))
=\sum_{i}(-1)^{i} \dim  H^i(\mathcal{M}_{\gamma}^{\theta},  \phi_{\mathrm{Tr}(W)_\gamma^\theta}(IC))\in \Z
\,,\end{equation}
where $e(-)$ is the Euler characteristic, $IC$ is the intersection cohomology sheaf on $\mathcal{M}_{\gamma}^{\theta}$ normalized to be the constant sheaf in degree $0$ if $\mathcal{M}_\gamma^\theta$ is smooth, and $\phi_{\mathrm{Tr}(W)_\gamma^\theta}$ is the vanishing cycle functor defined by the trace function
\cite{davison2015donaldson, davison2016cohomological, MR4000572}.

For every $\gamma \in N_Q$, we set 
\begin{equation}\label{eq_kappa}
\kappa(\gamma):=(-1)^{\chi_Q(\gamma,\gamma)} \in \{\pm 1\}\,,\end{equation}
where $\chi_Q: N_Q\times N_Q \rightarrow \Z$ is the Euler form of $Q$, given by 
\begin{equation} \label{eq_euler}
\chi_Q(\gamma,\gamma')=\sum_{i \in Q_0} \gamma_i \gamma_i' - \sum_{(\alpha:i \rightarrow j)\in Q_1} \gamma_i \gamma_j' \,.
\end{equation}
The data of the DT invariants 
$\Omega_\gamma^{+,\theta}$ can be repackaged into the \emph{rational DT invariants} 
\begin{equation}
\label{eq_dt_rational}
\overline{\Omega}_\gamma^{+,\theta} := \sum_{\substack{\gamma' \in N_Q\\ 
    \gamma=k \gamma',\, k\in \Z_{\geq 1}}} \frac{\kappa(\gamma')^{k-1}}{k^2} \Omega_{\gamma'}^{+,\theta} \in \QQ \,,
\end{equation}
which are often more convenient to work with, particularly when calculating them using wall structures \cite{ABflow}. Rational DT invariants can also be defined using the motivic Hall algebra \cite{JoyceSong, kontsevich2008stability, MR2650811,MR2801406}.

\begin{remark}
In the literature on DT invariants, it is more common to work with DT invariants $\Omega_\gamma^{\theta}$ defined by 
\[
\Omega_\gamma^{\theta}
:=(-1)^{\dim \mathcal{M}_\gamma^\theta}\, e(\mathcal{M}_\gamma^\theta, \phi_{\mathrm{Tr}(W)_\gamma^\theta}(IC)) \in \Z\,,\]
when the $\theta$-stable locus is non-empty, 
and the rational DT invariants $\overline{\Omega}_\gamma^\theta$ defined 
by 
\begin{equation} \nonumber
\overline{\Omega}_\gamma^\theta :=
\sum_{\substack{\gamma' \in N_Q\\ \gamma=k \gamma', k\in \Z_{\geq 1}}} \frac{1}{k^2} \Omega_{\gamma'}^\theta \in \QQ\,. \end{equation}
When the $\theta$-stable locus is non-empty, we have $\dim \mathcal{M}_\gamma^\theta = 1-\chi_Q(\gamma, \gamma)$, and so we have 
\[ \Omega_\gamma^{+,\theta}=-\kappa(\gamma) \Omega_\gamma^\theta\]
and one can check that
\[ \overline{\Omega}_\gamma^{+,\theta}=-\kappa(\gamma) \overline{\Omega}_\gamma^\theta\,.\]
In this paper, we work with the DT invariants $\Omega_\gamma^{+,\theta}$ as they have better positivity properties -- see Theorem \ref{thm_nice_dt}.
\end{remark}

While quiver DT invariants generally depend on the choice of the potential since the trace of $W$ appears in \eqref{eq_dt}, we will focus attention on particular quivers which have trivial \emph{attractor DT invariants} discussed in the following section -- for these quivers and dimension vectors outside the kernel of the skew-symmetrized Euler form, the DT invariants will be independent of the choice of the potential $W$ (see Corollary \ref{cor: Windependence}).

\subsection{Attractor DT invariants}
The DT invariants $\Omega_\gamma^{+,\theta}$ of a quiver with potential $W$ are locally constant functions of the $\gamma$-general stability parameter $\theta \in \gamma^{\perp}$. Their change across the loci of non-$\gamma$-general stability parameters can be computed by the wall-crossing formula of Joyce--Song \cite{JoyceSong} and Kontsevich--Soibelman \cite{kontsevich2008stability}. 
Using the wall-crossing formula, DT invariants can be expressed in terms of the simpler \emph{attractor DT invariants}, which are quiver DT invariants at specific values of the stability parameter, defined as follows. 

The skew-symmetrized Euler form $\omega_Q \colon N_Q \times N_Q \rightarrow \Z$ is defined by  
\begin{equation}
\label{Eq: Euler form}
 \omega_Q(\gamma, \gamma'):= \sum_{i,j\in Q_0}(a_{ij}-a_{ji})\gamma_i\gamma_j'\,,   
\end{equation}
where $a_{ij}$ is the number of arrows in $Q$ from the vertex $i$ to the vertex $j$.
For every $\gamma \in N_Q$, the specific point 
\[ \iota_\gamma \omega_Q :=\omega_Q ( \gamma,- ) \in \gamma^{\perp} \subset M_{\RR} \] 
is called the \emph{attractor point} for $\gamma$ \cite{AlexandrovPioline, mozgovoy2020attractor}. In general, the attractor point is not $\gamma$-general and we define
the attractor DT invariants $\Omega_\gamma^{*}$ by
\begin{equation}\nonumber
\Omega_\gamma^{+,*}
\coloneqq  \Omega_\gamma^{+,\theta_\gamma}\,,\end{equation} 
where $\theta_\gamma$
is a small $\gamma$-general perturbation of 
$\omega_Q ( \gamma,- )$ in $\gamma^{\perp}$ \cite{AlexandrovPioline, mozgovoy2020attractor}. The integer $\Omega_\gamma^{+,*}$ is independent of the choice of the small perturbation
\cite{AlexandrovPioline, mozgovoy2020attractor}. Thus, for a fixed dimension vector $\gamma \in N$, we have a well-defined attractor DT invariant $\Omega_\gamma^{+,\star}$ -- for detailed discussion of these invariants see \cite{AlexandrovPioline, KS, mozgovoy2020attractor}.
By iterative applications of the wall-crossing formula, general DT invariants are uniquely determined in terms of the attractor DT invariants. 
This reconstruction of the general DT invariants from the attractor DT invariants can be made explicit using either the flow tree formula \cite{ABflow} or the attractor tree formula \cite{mozgovoy2022operadic}.

We denote by $I \subset Q_0$ the set of vertices $i$ of $Q_0$ such that $\iota_{s_i} \omega_Q \neq 0$, that is, $s_i \notin \ker \omega_Q$. In this paper, we will mainly consider quivers with potentials having a very simple set of attractor DT invariants, as made precise in the following definition.

\begin{definition} \label{def_trivial_attractor_invariants}
    A quiver with potential $(Q,W)$ has \emph{trivial attractor DT invariants} if 
    \begin{itemize}
    \item[(i)] $\Omega_{s_i}^{+,\star} =1$ for all $i\in I$, and
    \item[(ii)] $\Omega_\gamma^{+,\star} =0$ for all $\gamma \in N_Q$ such that $\gamma\neq s_i$ for all $i \in Q_0$, and  $\gamma \notin \ker \omega_Q$.
    \end{itemize}
\end{definition}

\begin{remark} \label{rem_s_i}
    For every $k \in \Z_{\geq 1}$, the only decompositions of $ks_i$ in $N_Q$ with positive coefficients contain only multiples of $s_i$. As $\omega_Q(s_i,s_i)=0$, it follows from the wall-crossing formula that $\Omega_{ks_i}^{+,\theta}$ does not depend on $\theta$. In particular, if $\Omega_{s_i}^{+,\star} =1$ as in Definition \ref{def_trivial_attractor_invariants}(i), then we have $\Omega_{s_i}^{+,\theta} =1$ for all $\theta \in s_i^\perp$. Similarly, if $\Omega_{ks_i}^{+,\star} =0$ for all $k>1$ as in Definition \ref{def_trivial_attractor_invariants}(i), then we have $\Omega_{ks_i}^{+,\theta} =0$ for all $k>1$ and $\theta \in s_i^\perp$.
\end{remark}

\begin{remark} \label{rem_ker}
If $\gamma \in \ker \omega_Q$, then, by the wall-crossing formula, $\Omega_{\gamma}^{+,\theta}$ does not depend on $\theta$, and so in particular we have $\Omega_{\gamma}^{+,\theta}=\Omega_{\gamma}^{+,\star}$ for all $\theta \in \gamma^\perp$. It also follows from the wall-crossing formula that these invariants do not play a role in any wall-crossing of other invariants  $\Omega_{\gamma'}^{+,\theta}$. In particular, if $\gamma' \notin \ker \omega_Q$, then $\Omega_{\gamma'}^{+,\theta}$ can be recovered from the attractor DT invariants $\Omega_{\gamma}^{+,\theta}$ with $\gamma \notin \ker \omega_Q$.
\end{remark}

\begin{remark} 
A closely related notion has been introduced in \cite[Def 7.3]{davison2021positivity}: a quiver is called \emph{genteel} if $\Omega_{\gamma}^{+,\star} =0$ unless $\gamma \in \Z_{\geq 1} s_i$ for some $i \in Q_0$. However, to be genteel and to have trivial attractor DT invariants are slightly different in general. For example, we allow $\Omega_{\gamma}^{+,\star} \neq 0$ if $\gamma \in \ker \omega_Q$ in Definition \ref{def_trivial_attractor_invariants}. On the other hand, for a genteel quiver as in \cite[Def 7.3]{davison2021positivity}, to have $\Omega_{k s_i}^{+,\star} \neq 0$ for $k>1$ is allowed, whereas it is not for a quiver with trivial DT invariants as in Definition \ref{def_trivial_attractor_invariants}.
\end{remark}

DT invariants of a quiver with potential having trivial attractor DT invariants have particularly nice positivity properties, as illustrated by the following result.

\begin{theorem} \label{thm_nice_dt}
Let $(Q,W)$ be a quiver with potential having trivial attractor DT invariants. Then, for every $\gamma \in N_Q \setminus \ker \omega_Q$ and every $\gamma$-general stability parameter $\theta \in \gamma^\perp$, the DT invariant $\Omega_\gamma^{+,\theta}$ is a non-negative integer. 
\end{theorem}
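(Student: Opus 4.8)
The plan is to deduce the positivity of $\Omega_\gamma^{+,\theta}$ from the main correspondence Theorem~\ref{thm_main_intro} together with known positivity results for punctured Gromov--Witten invariants, reducing first to the case $\theta \in \psi^\vee(M_\RR)$ handled by the theorem, and separately dealing with the excluded dimension vectors $\gamma \in \Z_{\geq 1}s_i$. First I would dispose of the trivial cases: if $\gamma = k s_i$ with $k \geq 1$, then Remark~\ref{rem_s_i} gives $\Omega_{s_i}^{+,\theta}=1 \geq 0$ and $\Omega_{ks_i}^{+,\theta}=0$ for $k>1$, so there is nothing to prove. Hence we may assume $\gamma \in N_Q \setminus \ker\omega_Q$ with $\gamma \notin \Z_{\geq 1}s_i$ for all $i$, which is exactly the hypothesis of Theorem~\ref{thm_main_intro}.

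Next I would show that for such $\gamma$ it suffices to prove positivity of $\Omega_\gamma^{+,\theta}$ (equivalently, of $\overline{\Omega}_\gamma^{+,\theta}$ after an inductive unwinding of \eqref{eq_dt_rational}) for a single well-chosen stability parameter $\theta$. Since $\Omega_\gamma^{+,\theta}$ is locally constant in $\theta$ on the complement of walls and attractor DT invariants are trivial, the wall-crossing formula expresses $\overline{\Omega}_\gamma^{+,\theta}$ for arbitrary $\gamma$-general $\theta$ in terms of attractor invariants, but this does not obviously preserve signs; instead the right move is to observe that the correspondence \eqref{eq_main_intro} can be applied after replacing the symplectic seed by one whose data $\psi$ makes the given $\theta$ lie in $\psi^\vee(M_\RR)$ — concretely, one can always find a compatible symplectic seed $\mathbf{s}$ (e.g. using $\omega_Q$ itself, or a suitable quotient) and a point $x \in M_\RR$ with $\psi^\vee(x) = \theta$, at least for $\theta$ in a dense subset, and then extend to all $\gamma$-general $\theta$ by local constancy. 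Then \eqref{eq_main_intro} reads
\[
\overline{\Omega}_\gamma^{+,\theta} = \frac{1}{|\gamma|}\sum_{\tau \in \mathcal{T}_\gamma^x} k_\tau\, N_{\tau,\beta_\gamma^x}^{(X,D)},
\]
with $k_\tau$ a positive integer and $N_{\tau,\beta}^{(X,D)}$ genus zero punctured Gromov--Witten invariants of the log Calabi--Yau pair $(X,D)$.

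The crux is then to invoke positivity of these punctured invariants. For log Calabi--Yau pairs arising as blow-ups of toric varieties, the canonical scattering diagram of Gross--Siebert has wall-function coefficients that are, up to the combinatorial factors $k_\tau$, nonnegative — this is where I would cite the explicit description of the canonical scattering diagram in \cite{HDTV} (Mark Gross and the first author), in which the structure constants are manifestly counts of tropical curves / punctured curves with nonnegative multiplicities, hence $N_{\tau,\beta_\gamma^x}^{(X,D)} \geq 0$. Combined with $k_\tau \in \Z_{\geq 1}$ and $|\gamma| \in \Z_{\geq 1}$, this forces $\overline{\Omega}_\gamma^{+,\theta} \geq 0$, and then a downward induction on divisibility using \eqref{eq_dt_rational} (together with the already-established positivity in lower divisibility and the alternating-sign Möbius-type inversion) yields $\Omega_\gamma^{+,\theta} \geq 0$; integrality is immediate from \eqref{eq_dt_def}.

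The main obstacle I anticipate is twofold. First, one must make sure that \emph{every} $\gamma$-general $\theta \in \gamma^\perp$, not merely those in some $\psi^\vee(M_\RR)$, is covered — this requires either a density argument plus local constancy of $\Omega_\gamma^{+,\theta}$ across the relevant chamber, or a more careful choice of compatible seed adapted to $\theta$; checking that the chamber containing $\theta$ always meets $\psi^\vee(M_\RR)$ for a suitable $\psi$ is the delicate combinatorial point. Second, extracting honest nonnegativity of $N_{\tau,\beta}^{(X,D)}$ from \cite{HDTV} must be done carefully: the punctured invariants themselves need not individually be nonnegative in general log Calabi--Yau settings, so the argument really uses that in the blow-up-of-toric case the wall functions are power series with nonnegative coefficients (a positivity statement ultimately traceable to the tropical vertex / scattering algorithm of \cite{GPS}), and one needs the precise dictionary from \S\ref{sec_dt_gw} relating the coefficient of $z^{\beta}$ in the relevant wall function to $\sum_\tau k_\tau N_{\tau,\beta}^{(X,D)}$. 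I expect the inductive step on divisibility to be routine once positivity of $\overline{\Omega}_\gamma^{+,\theta}$ is in hand.
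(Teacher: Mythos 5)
Your proposal takes a genuinely different route from the paper, and it has gaps that make it unworkable as written.

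The paper's proof is short and direct: by Remark~\ref{rem_ker}, the invariants $\Omega_\gamma^{+,\theta}$ for $\gamma \notin \ker\omega_Q$ are reconstructed via wall-crossing from the attractor DT invariants $\Omega_{\gamma'}^{+,\star}$ with $\gamma'\notin\ker\omega_Q$; these are $0$ or $1$ by the trivial-attractor hypothesis; and positivity of DT invariants is preserved under wall-crossing by the proof of \cite[Thm~1.13]{GHKK} (see also \cite{davison2021positivity}). That is the whole argument. In contrast, you route through the main Theorem~\ref{thm_main_intro} and the punctured Gromov--Witten side, trying to import positivity from the curve-counting picture. But that direction of the implication is backwards: the paper explicitly remarks, just after Theorem~\ref{thm_main_intro}, that the integrality and positivity of $\Omega_\gamma^{+,\theta}$ are ``highly non-trivial from the point of view of punctured Gromov--Witten invariants.'' The punctured invariants $N_{\tau,\beta}^{(X,D)}$ are degrees of virtual classes and a priori only rational; there is no established result in \cite{HDTV} or \cite{gross2021canonical} that they are nonnegative. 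The positivity of wall-function coefficients in cluster scattering diagrams is precisely the content of \cite[Thm~1.13]{GHKK}, whose proof goes through quiver representation theory, not through geometric curve counts — so when you appeal to ``positivity ultimately traceable to the tropical vertex / scattering algorithm of \cite{GPS},'' you are invoking a fact that is established only via the DT side you are trying to avoid.

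Two further gaps: (i) Theorem~\ref{thm_main_intro} only applies to $\theta\in\psi^\vee(M_\RR)\cap\gamma^\perp$, which is a proper affine subspace of $\gamma^\perp$ whenever $\omega_Q$ is degenerate. Local constancy of $\Omega_\gamma^{+,\theta}$ holds only within a chamber, and there is no argument given (nor is one obviously available) that every chamber of $\gamma^\perp$ meets $\psi^\vee(M_\RR)$ for a suitable choice of compatible seed; you flag this yourself as ``the delicate combinatorial point,'' but it is not resolved. (ii) The passage from nonnegativity of the rational invariants $\overline{\Omega}_{\gamma}^{+,\theta}$ to nonnegativity of the integer invariants $\Omega_\gamma^{+,\theta}$ via inverting \eqref{eq_dt_rational} fails: the Möbius-type inversion of \eqref{eq_dt_rational} has alternating signs, so $\overline{\Omega}_{k\gamma_0}^{+,\theta}\geq 0$ for all $k$ does not imply $\Omega_{k\gamma_0}^{+,\theta}\geq 0$. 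The paper's wall-crossing argument produces positivity of the integer DT invariants directly, avoiding this inversion entirely.
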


\begin{proof}
By Remark \ref{rem_ker}, DT invariants $\Omega_\gamma^{+,\theta}$ with $\gamma \notin \ker \omega_Q$ can be reconstructed using the wall-crossing formula from attractor DT invariants $\Omega_{\gamma'}^{+,\star}$ with $\gamma' \notin \ker \omega_Q$.
For a quiver with potential having trivial attractor DT invariants, the attractor DT invariants $\Omega_{\gamma'}^{+,\star}$ with $\gamma' \notin \ker \omega_Q$ are either $0$ or $1$, and so in particular are positive. The result follows because positivity of DT invariants is preserved under wall-crossing by the proof of \cite[Thm 1.13]{GHKK} (see also \cite{davison2021positivity}).
\end{proof}

In many situations the attractor DT invariants are known to be trivial. We provide some examples below.

\begin{example} \label{ex_1}
It is shown by Bridgeland \cite{Bridgeland} (see also \cite[Lemma 7.5]{davison2021positivity}) that if $Q$ is acyclic then 
\begin{equation}
\label{eq: Bridgeland-Mou}
   \Omega_\gamma^{+,\star}= \begin{cases} 
      1 & \mathrm{if} \,\ \gamma = s_i \,\,\text{for some}\,\, i\in Q_0 \\
      0 & \mathrm{otherwise}
   \end{cases} 
\end{equation}
and so in particular $Q$ has trivial attractor  DT invariants.
In \cite{mou2021scattering}, Lang Mou shows
more generally that the attractor DT invariants of a 2-acyclic quiver with non-degenerate potential $(Q,W)$ which admits a so called green-to-red sequence are also given as in \eqref{eq: Bridgeland-Mou}. Many quivers of interest in representation theory admit green-to-red sequences -- see for example \cite{shen2021cluster, WengDTBruhat}.
It is also known that among the finite mutation quivers, which include the quivers associated with triangulations of surfaces \cite{LF}, all of them admit a green-to-red sequence, except those associated with one-punctured surfaces of genus $g \geq 1$ and the so called $X_7$ quiver
\cite{mills2017maximal}. Moreover, it follows from \cite{chen2023stability} that \eqref{eq: Bridgeland-Mou} still holds for the quivers associated with the one-punctured surfaces of genus $g \geq 2$, despite the fact that they do not admit green-to-red sequences. Finally, by \cite[Cor 1.2 (ii)]{mou2021scattering}, \eqref{eq: Bridgeland-Mou} does not hold for the quiver $Q$ associated to the once-punctured torus, but still $Q$ has trivial attractor DT invariants because the dimension vector of the additional non-zero attractor DT invariant is contained in $\ker \omega_Q$.
\end{example}

\begin{example} \label{ex_2}

Given a toric Calabi-Yau 3-fold $X$, one can construct a quiver with a potential $(Q,W)$ such that
\[D^bRep(Q,W) \cong D^bCoh(X),\]
where $D^bRep(Q,W)$ is the bounded derived category of representations of $(Q,W)$ and $D^bCoh(X)$ is the bounded derived category of coherent sheaves on $X$ \cite{mozgovoy2009crepant}. 
When $X$ admits compact divisors,
Beaujard--Manschot--Pioline \cite{beaujard2020vafa} and Mozgovoy--Pioline \cite{mozgovoy2020attractor} conjecture that 
$(Q,W)$ admits trivial attractor DT invariants.
Moreover, Descombes formulates in \cite[Conj 1.3]{MR4524190} a conjecture for the values of the non-zero attractor DT invariants $\Omega_\gamma^\star$ with $\gamma \in \ker \omega_Q$. Both conjectures are proved for $X$ equal to the local projective plane by Bousseau--Descombes--Le Floch--Pioline \cite[Thm 1]{bousseau2022bps}.
\end{example}

We provide below an explicit example of quiver with potential having non-trivial attractor DT invariants, following \cite[\S 5.2.3]{denef2011split}. 
\begin{example}
Let $a$, $b$, and $c$ be three distinct positive integers such that $a+b \geq c$,
$b+c \geq a$ and $c+a \geq b$. Let $Q$ be the 3-gon quiver with three vertices $s_1$, $s_2$, $s_3$, and $a$ arrows from $s_3$ to $s_2$, $b$ arrows from $s_2$ to $s_1$, and $c$ arrows from $s_1$ to $s_3$. For example, one can take $a=5$, $b=4$, and $c=3$, as in Figure \ref{Fig:badquiver}.
Fix the dimension vector $\gamma=(1,1,1)$. The corresponding attractor point is given by $\iota_\gamma \omega_Q = (c-a, a-b, b-c)$, and so we have in particular that $\gamma \notin \ker \omega_Q$. On the other hand, it is shown in \cite[\S 5.2.3]{denef2011split}, that for $W$ a generic cubic potential for $Q$, if the stability parameter $\theta=(\theta_1,\theta_2,\theta_3)$ satisfies $\theta_1 <0$ and $\theta_3 >0$, then, 
the critical locus of the trace function $\mathrm{Tr} (W)$ on the moduli space $\mathcal{M}_\gamma^\star$ for a stability parameter close to the attractor point is a smooth complete intersection $X$ of $c$ hypersurfaces of bi-degree $(1,1)$ in $\PP^{a-1} \times \PP^{b-1}$. Moreover, $\mathrm{Tr} (W)$ is transverse in the directions normal to its critical locus, and so $\Omega_{\gamma}^{+,\star}=e(X)$, which is non-zero in general (for example if $\dim X=a+b-2-c$ is even, because the cohomology of $X$ is then concentrated in even degrees by the Lefschetz hyperplane theorems). As $\Omega_{\gamma}^{+,\star} \neq 0$ and $\gamma \notin \ker \omega_Q$, it follows that $(Q,W)$ does not have trivial attractor DT invariants. Additional explicit formulas and asymptotics for the Euler characteristic $e(X)$ can be found in \cite[App E]{denef2011split} and \cite[\S 4]{lee2012quiver}.
\end{example}

\begin{figure}[h]
\center{\includegraphics{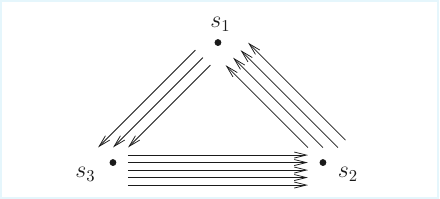}}
\caption{A 3-node quiver with non-trivial attractor DT invariants for a generic cubic potential.}
\label{Fig:badquiver}
\end{figure}

\subsection{Stability scattering diagram}

Given a quiver $Q$, we denote
\[ N_Q^\oplus:=\{ \gamma =\sum_{i \in I} \gamma_i s_i \in N_Q \,|\, \gamma_i \in \Z_{\geq 0} \} \,,\]
and \[ N_Q^+ := N_Q^\oplus \setminus \{0\}\,.\]
We denote by $\Q[N_Q^{\oplus}]$ the monoid $\Q$-algebra of $N_Q^{\oplus}$, that is the algebra of polynomials 
\[ \sum_{\gamma \in N_Q^{\oplus}} c_\gamma z^\gamma \] 
with $c_\gamma \in \Q$. Let $\mathfrak{m}$ be the maximal ideal of $\Q[N_Q^{\oplus}]$, generated by the monomials $z^\gamma$ with $\gamma \in N_Q^+$. We denote by $\Q[\![N_Q^{\oplus}]\!]$ the completion of $\Q[N_Q^{\oplus}]$ with respect to this maximal ideal: it is the algebra of formal power series 
\[ \sum_{\gamma \in N_Q^{\oplus}} c_\gamma z^\gamma \] with for every $k \in \Z_{\geq 0}$ finitely many coefficients $c_\gamma \neq 0$ with $\gamma=\sum_{i\in I}\gamma_i s_i$ and $\sum_{i \in I}\gamma_i \leq k$. 

\begin{definition} \label{def_dt_wall}
Let $Q$ be a quiver with $d$ vertices. 
A \emph{$Q$-wall} is a pair $(\fod, f_\fod)$, where 
\begin{itemize}
    \item[(i)] $\fod$ is a $(d-1)$-dimensional convex rational polyhedral cone in $M_{Q,\RR}$ contained in an hyperplane of the form $\gamma_\fod^\perp$, with $\gamma_\fod \in N_Q^+$ primitive.
    \item[(ii)] $f_\fod \in \Q[\![z^{\gamma_\fod}]\!] \subset \Q[\![N_Q^{\oplus}]\!]$ is a formal power series in $z^{\gamma_\fod}$ with constant term $1$, that is of the form \begin{equation} \label{eq_Q_wall}
    f_\fod = 1+\sum_{\gamma \in \Z_{\geq 1} \gamma_\fod} c_\gamma z^\gamma\end{equation}
    with $c_\gamma \in \Q$.
\end{itemize}
\end{definition}

\begin{remark}
    Note that $\gamma_\fod$ in Definition \ref{def_dt_wall}(i) is uniquely determined by $\fod$.
\end{remark}

\begin{definition} \label{def_incoming_Q_wall}
A $Q$-wall $(\fod, f_\fod)$ is \emph{incoming}
if $\iota_{\gamma_\fod} \omega_Q \in \fod$, that is, if $\fod$ contains the attractor point for $\gamma_\fod$.
\end{definition}

\begin{definition} \label{def_dt_scattering} 
A \emph{$Q$-scattering diagram} is a set $\foD=\{(\fod, f_\fod)\}$ of $Q$-walls such that for every $k \in \Z_{\geq 0}$, there exist finitely many walls $(\fod, f_\fod) \in \foD$ with $f_\fod \neq 1 \mod \mathfrak{m}^k$. 
\end{definition}

We define the \emph{support} of
a $Q$-scattering diagram $\foD=\{(\fod, f_\fod) \}$ in $M_{Q,\RR}$ by
\begin{equation}\nonumber
\mathrm{Supp}(\foD)=\bigcup_{\fod \in \foD} \fod \subset M_{Q,\RR} \,, 
\end{equation}
and its
\emph{singular locus} by 
\begin{equation} \nonumber
\mathrm{Sing}(\foD)=\bigcup_{\fod \in \foD} \partial \fod \cup \bigcup_{\fod, \fod' \in \foD} \fod \cap \fod' \subset M_{Q,\RR} \,,\end{equation}
where $\partial \fod$ denotes the boundary of a wall $\fod$, and the last union is over the pairs of walls $\fod, \fod'$ such that $\mathrm{codim} \fod \cap \fod' \geq 2$.
For every point $x\in M_{Q,\RR} \setminus \Sing(\foD)$, we define
\begin{equation} \nonumber
f_{\foD,x}:= \prod_{\substack{\fod \in \foD\\ x \in \fod}} f_{\fod} \,,
\end{equation}
where the product is over all the walls of $\foD$ containing $x$.
Two $Q$-scattering diagrams $\foD$ and $\foD'$ are called \emph{equivalent} if 
\begin{equation} \nonumber
f_{\foD,x}=f_{\foD', x}
\end{equation}
for all $x \in M_{Q,\RR} \setminus (\mathrm{Sing}(\foD) \cup \mathrm{Sing}(\foD'))$.

Given a path
 \begin{align*} \alpha: [0,1] &\longrightarrow M_{Q,\RR} \setminus \mathrm{Sing}(\foD)\\ 
 \tau &\longmapsto \alpha(\tau)\,\end{align*}
intersecting transversally $\Supp(\foD)$, and $\tau \in [0,1]$ such that $\alpha(\tau)\in \Supp(\foD)$, we define an automorphism of $\Q$-algebras 
\begin{align} \label{eq_autom_Q}
\mathfrak{p}_{\foD, \alpha, \tau} : \Q[\![N_Q^{\oplus}]\!] &\longrightarrow \Q[\![N_Q^{\oplus}]\!]\\
z^{\gamma} &\longmapsto f_{\foD,\alpha(\tau)}^{\epsilon_{\alpha,\tau}\,\omega_Q(\gamma_{\alpha,\tau} ,\gamma)} z^{\gamma}\,,
\nonumber
\end{align}
where $\gamma_{\alpha,\tau}$ is the unique primitive element in $N_Q^+$ such that $\fod \subset \gamma_{\alpha,\tau}^\perp$ for all walls $\fod$ containing $\alpha(\tau)$, and $\epsilon_{\alpha,\tau} \in \{\pm 1\}$ is the sign such that $\epsilon_{\alpha,\tau}\, \omega_Q (\gamma_{\alpha,\tau}, \alpha'(\tau)) <0$.
The path ordered automorphism 
$\mathfrak{p}_{\foD, \alpha}$ is the product of automorphisms
$\mathfrak{p}_{\foD, \alpha, \tau}$ for all $\tau \in [0,1]$ such that $\alpha(\tau)\in \Supp(\foD)$, and ordered following increasing values of $\tau$.
Finally, a scattering diagram is called \emph{consistent} 
if for any path $\alpha$ in $M_\RR \setminus \mathrm{Sing}(\foD)$ with $\alpha(0)=\alpha(1)$ 
the associated path ordered automorphism 
$\mathfrak{p}_{\foD, \alpha}$ is the identity.

\begin{definition} \label{def_central}
A $Q$-wall $\fod$ is called \emph{central} if $\gamma_\fod \in \ker \omega_Q$. 
\end{definition}

Given a $Q$-scattering diagram $\foD$, there is a natural way to produce a $Q$-scattering diagram $\overline{\foD}$ without central walls: $\overline{\foD}$ is simply the set of non-central walls of $\foD$.

\begin{lemma} \label{lem_no_central}
If a $Q$-scattering diagram $\foD_Q$ is consistent, then the corresponding $Q$-scattering diagram $\overline{\foD}_Q$ without central walls is also consistent.
\end{lemma}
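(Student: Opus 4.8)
The plan is to show that the path-ordered automorphism $\mathfrak{p}_{\overline{\foD}_Q, \alpha}$ attached to any loop $\alpha$ equals the identity by relating it to the corresponding automorphism for the consistent diagram $\foD_Q$. The key observation is that a central wall $\fod$ has $\gamma_\fod \in \ker\omega_Q$, so in the definition \eqref{eq_autom_Q} the exponent $\epsilon_{\alpha,\tau}\,\omega_Q(\gamma_{\alpha,\tau},\gamma)$ vanishes whenever $\gamma_{\alpha,\tau} = \gamma_\fod \in \ker\omega_Q$; hence crossing a central wall contributes the \emph{identity} automorphism to the path-ordered product. More precisely, the only subtlety is at points of $\Supp(\foD_Q)$ lying on several walls simultaneously: if $\alpha(\tau)$ lies on walls $\fod$ all of which share the same primitive direction $\gamma_{\alpha,\tau}$, then the local automorphism depends on $f_{\foD_Q,\alpha(\tau)} = \prod_{\fod \ni \alpha(\tau)} f_\fod$, and multiplying in extra central factors $f_\fod$ with $\gamma_\fod \in \ker\omega_Q$ changes $f_{\foD_Q,\alpha(\tau)}$ but, since it is raised to the power $\omega_Q(\gamma_{\alpha,\tau},\gamma)$, either this power is zero (when $\gamma_{\alpha,\tau}\in\ker\omega_Q$) or the central factors are irrelevant because $\gamma_{\alpha,\tau}$ is then non-central and no central wall can contain $\alpha(\tau)$ transversally in the right hyperplane.

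In more detail, here are the steps I would carry out. First, I would reduce to a single loop $\alpha$ and perturb it to be generic, so that at each crossing time $\tau$ the point $\alpha(\tau)$ lies either on a collection of non-central walls all of the same primitive direction $\gamma_{\alpha,\tau}\notin\ker\omega_Q$, or on a collection of central walls (direction in $\ker\omega_Q$), but never simultaneously on walls of both kinds with distinct directions; this uses that $\Sing$ has been avoided and that central and non-central walls live in hyperplanes $\gamma^\perp$ with $\gamma$ of genuinely different nature. Second, I would observe that $f_{\overline{\foD}_Q,x} = \prod_{\fod \ni x,\ \fod \text{ non-central}} f_\fod$, which differs from $f_{\foD_Q,x}$ only by central factors $f_\fod$ with $\gamma_\fod\in\ker\omega_Q$. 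Third, I would compare the local automorphisms $\mathfrak{p}_{\overline{\foD}_Q,\alpha,\tau}$ and $\mathfrak{p}_{\foD_Q,\alpha,\tau}$: at a crossing where $\gamma_{\alpha,\tau}\in\ker\omega_Q$, both are the identity (the exponent $\omega_Q(\gamma_{\alpha,\tau},\gamma)$ vanishes for all $\gamma$); at a crossing where $\gamma_{\alpha,\tau}\notin\ker\omega_Q$, no central wall passes through $\alpha(\tau)$ along $\gamma_{\alpha,\tau}^\perp$ (a central wall lies in $\gamma_\fod^\perp$ with $\gamma_\fod\in\ker\omega_Q$, which cannot coincide with $\gamma_{\alpha,\tau}^\perp$), so $f_{\overline{\foD}_Q,\alpha(\tau)} = f_{\foD_Q,\alpha(\tau)}$ and the two local automorphisms agree. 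Hence $\mathfrak{p}_{\overline{\foD}_Q,\alpha}$ and $\mathfrak{p}_{\foD_Q,\alpha}$ are obtained by composing the same ordered list of automorphisms (those for $\foD_Q$ at central crossings being trivial), so they are equal; since $\foD_Q$ is consistent the latter is the identity, and the claim follows.

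The main obstacle I anticipate is the bookkeeping at crossing points where a central wall and a non-central wall meet, i.e.\ verifying that after a generic perturbation of $\alpha$ one never needs to take the automorphism associated with a ``mixed'' point. The cleanest way around this is the argument above: even if $\alpha(\tau)$ did lie on both a central wall $\fod_0$ and non-central walls $\fod_1,\dots,\fod_r$ all with the same direction $\gamma_{\alpha,\tau} = \gamma_{\fod_1}$, consistency of the definition forces all walls through $\alpha(\tau)$ to lie in the common hyperplane $\gamma_{\alpha,\tau}^\perp$, which for $\fod_0$ would force $\gamma_{\fod_0}$ proportional to $\gamma_{\alpha,\tau}$ — impossible since one is in $\ker\omega_Q$ and the other is not. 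So such mixed points simply do not occur, and the factorization of $\mathfrak{p}_{\foD_Q,\alpha}$ into a product over single-direction crossings is unambiguous. This also shows, incidentally, that $\overline{\foD}_Q$ itself satisfies the local finiteness condition of Definition \ref{def_dt_scattering}, so it is indeed a $Q$-scattering diagram.
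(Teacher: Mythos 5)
Your proposal is correct and matches the paper's proof in its essential content: the paper also argues directly from \eqref{eq_autom_Q} that $\mathfrak{p}_{\foD,\alpha,\tau}$ is the identity when $\gamma_{\alpha,\tau}\in\ker\omega_Q$, so removing central walls leaves $\mathfrak{p}_{\foD,\alpha}$ unchanged and consistency is inherited. Your additional bookkeeping about mixed crossing points and the local finiteness of $\overline{\foD}_Q$ is harmless elaboration of what the paper treats implicitly.
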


\begin{proof}
It follows from \eqref{eq_autom_Q} that the automorphism $\mathfrak{p}_{\foD,
\alpha,\tau}$ is the identity if $\gamma_{\alpha,\tau} \in \ker \omega_Q$. Hence, removing the central walls does not change the automorphisms $\mathfrak{p}_{\foD,\alpha}$: we have $\mathfrak{p}_{\overline{\foD},\alpha}=\mathfrak{p}_{\foD,\alpha}$ for all paths $\alpha$. In particular, this implies that, if $\foD$ is consistent, then $\overline{\foD}$ is also consistent.
\end{proof}

We now review following \cite{Bridgeland} how DT invariants of a quiver with potential can be organized into a scattering diagram called the stability stability scattering.

\begin{definition}
Let $(Q,W)$ be a quiver with potential. The \emph{stability scattering diagram} $\foD^{\mathrm{st}}_{(Q,W)}$ is the unique $Q$-scattering diagram up to equivalence  such that for every primitive dimension vector $\gamma_0 \in N_Q^{\oplus}$, and general point $\theta \in \gamma_0^{\perp}$, we have 
\begin{equation} \label{eq_stab_scattering}
f_{\foD_{(Q,W)}^\mathrm{st},\theta}
= \exp \left( \sum_{\gamma \in \Z_{\geq 1}\gamma_0} |\gamma| \,\,\overline{\Omega}_\gamma^{+,\theta} z^\gamma \right) \,.
\end{equation}
\end{definition}

According to \cite[Thm 1.1]{Bridgeland}, the stability scattering diagram $\foD^{\mathrm{st}}_{(Q,W)}$ is consistent. 
It follows from Lemma \ref{lem_no_central}
that the corresponding scattering diagram without central walls $\overline{\foD}^{\mathrm{st}}_{(Q,W)}$
is also consistent.

Following \cite[\S 11.4]{Bridgeland}, we review the notion of a cluster scattering diagram.

\begin{definition}
Let $Q$ be a quiver. The \emph{initial cluster scattering diagram} $\foD^{\mathrm{cl}}_{Q, \mathrm{in}}$ is the $Q$-scattering diagram with set of walls 
\begin{equation} \label{eq_initial_cluster}
\foD^{\mathrm{cl}}_{Q, \mathrm{in}} :=
\{ (s_i^{\perp}, 1+z^{s_i})\}_{i \in I}\,.
\end{equation}
\end{definition}

Note that all walls of $\foD^{\mathrm{cl}}_{Q, \mathrm{in}}$ are incoming as $\iota_{s_i} \omega_Q \in s_i^{\perp}$ for all $i\in I$.

By \cite[Prop 3.3.2]{KS} (see also \cite[Thm 1.21]{GHKK}), there exists a unique up to equivalence $Q$-scattering diagram $\foD^{\mathrm{cl}}_{Q}$ containing the initial cluster scattering diagram $\foD^{\mathrm{cl}}_{Q, \mathrm{in}}$ and such that every wall $\fod \in\foD^{\mathrm{cl}}_{Q} \setminus \foD^{\mathrm{cl}}_{Q, \mathrm{in}}$ is non-incoming.  We refer to $\foD^{\mathrm{cl}}_{Q}$ as the \emph{cluster scattering diagram}.

\begin{theorem} \label{thm_dt_cluster}
Let $(Q,W)$ be a quiver with potential having trivial attractor DT invariants as in Definition \ref{def_trivial_attractor_invariants}. 
Then, the scattering diagram $\overline{\foD}^{\mathrm{st}}_{(Q,W)}$ without central walls corresponding to
the stability scattering diagram $\foD^{\mathrm{st}}_{(Q,W)}$ and the cluster scattering diagram $\foD^{\mathrm{cl}}_{Q}$ are equivalent.
\end{theorem}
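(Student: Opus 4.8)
The plan is to check that $\overline{\foD}^{\mathrm{st}}_{(Q,W)}$ satisfies the three conditions characterizing the cluster scattering diagram $\foD^{\mathrm{cl}}_{Q}$ in the uniqueness statement of \cite[Prop 3.3.2]{KS} (see also \cite[Thm 1.21]{GHKK}): it is consistent, it contains a representative equivalent to the initial cluster scattering diagram $\foD^{\mathrm{cl}}_{Q,\mathrm{in}}=\{(s_i^{\perp},1+z^{s_i})\}_{i\in I}$, and every one of its remaining walls is non-incoming. Granting these, uniqueness forces $\overline{\foD}^{\mathrm{st}}_{(Q,W)}\sim\foD^{\mathrm{cl}}_{Q}$. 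The first condition is immediate: $\foD^{\mathrm{st}}_{(Q,W)}$ is consistent by \cite[Thm 1.1]{Bridgeland}, and consistency passes to $\overline{\foD}^{\mathrm{st}}_{(Q,W)}$ by Lemma \ref{lem_no_central}.

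For the other two conditions I would analyze the incoming walls of $\foD^{\mathrm{st}}_{(Q,W)}$ direction by direction. Fix a primitive $\gamma_0\in N_Q^{+}$ which is not central, so that $\iota_{\gamma_0}\omega_Q\neq 0$. Any incoming wall in direction $\gamma_0$ contains $\iota_{\gamma_0}\omega_Q$, so its contribution is computed by \eqref{eq_stab_scattering} at a $\gamma_0$-general $\theta\in\gamma_0^{\perp}$ close to $\iota_{\gamma_0}\omega_Q$; for such $\theta$ one has $\overline{\Omega}^{+,\theta}_{k\gamma_0}=\overline{\Omega}^{+,\star}_{k\gamma_0}$ for all $k\geq 1$, so the total wall function of $\foD^{\mathrm{st}}_{(Q,W)}$ along $\gamma_0^{\perp}$ near $\iota_{\gamma_0}\omega_Q$ is $\exp\bigl(\sum_{k\geq 1}|k\gamma_0|\,\overline{\Omega}^{+,\star}_{k\gamma_0}z^{k\gamma_0}\bigr)$. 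If $\gamma_0\neq s_i$ for every $i$, then for each $k\geq 1$ the vector $k\gamma_0$ is neither a generator $s_j$ nor in $\ker\omega_Q$, so Definition \ref{def_trivial_attractor_invariants}(ii) and \eqref{eq_dt_rational} give $\overline{\Omega}^{+,\star}_{k\gamma_0}=0$; hence this function is $1$, and up to equivalence we may assume $\overline{\foD}^{\mathrm{st}}_{(Q,W)}$ has no incoming wall in direction $\gamma_0$. If instead $\gamma_0=s_i$ with $i\in I$, then $\Omega^{+,\theta}_{ks_i}$ is independent of $\theta$ by Remark \ref{rem_s_i} and equals $1$ for $k=1$ and $0$ for $k>1$ by Definition \ref{def_trivial_attractor_invariants}, so $\overline{\Omega}^{+,\theta}_{ks_i}=(-1)^{k-1}/k^2$, the divisibility $|ks_i|$ equals $k$, and the total function along $s_i^{\perp}$ is $\exp\bigl(\sum_{k\geq 1}(-1)^{k-1}z^{ks_i}/k\bigr)=1+z^{s_i}$ at every general point; thus $(s_i^{\perp},1+z^{s_i})$ is a wall of a representative of $\foD^{\mathrm{st}}_{(Q,W)}$, it is incoming, and it is non-central. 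Since central directions contribute only central walls, which are deleted from $\overline{\foD}^{\mathrm{st}}_{(Q,W)}$ and are absent from $\foD^{\mathrm{cl}}_{Q,\mathrm{in}}$, it follows that the incoming walls of $\overline{\foD}^{\mathrm{st}}_{(Q,W)}$ are exactly $\foD^{\mathrm{cl}}_{Q,\mathrm{in}}$, which gives the second and third conditions and hence the theorem.

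The step I expect to be the main obstacle is making rigorous the claim that the incoming part of $\foD^{\mathrm{st}}_{(Q,W)}$ in direction $\gamma_0$ is governed by the attractor invariants $\overline{\Omega}^{+,\star}_{k\gamma_0}$. Two points need care: the attractor point $\iota_{\gamma_0}\omega_Q$ may lie in $\Sing(\foD^{\mathrm{st}}_{(Q,W)})$ (on the boundary of walls or in a higher-codimension stratum), so one must argue that a $\gamma_0$-general perturbation really detects every incoming wall in that direction and computes $\overline{\Omega}^{+,\star}_{k\gamma_0}$; and one must use that a pair of walls supported on the same cone with reciprocal functions can be discarded up to equivalence, so that vanishing of the total function genuinely permits removing all incoming walls in direction $\gamma_0$. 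Both are controlled by the structural description of the stability scattering diagram near attractor points in \cite{Bridgeland}, which also underpins the reconstruction of a consistent $Q$-scattering diagram from its incoming walls, as in the flow tree formula \cite{ABflow}.
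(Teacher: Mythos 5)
Your proof is correct and follows essentially the same path as the paper's: identify the walls along $s_i^\perp$ as $(s_i^\perp,1+z^{s_i})$ by the constancy argument in Remark \ref{rem_s_i} and the Euler product computation, argue via vanishing of attractor invariants (combined with Remark \ref{rem_ker} to handle the central directions) that up to equivalence there are no further incoming walls, and invoke the uniqueness in \cite[Prop 3.3.2]{KS}/\cite[Thm 1.21]{GHKK}. The subtlety you flag at the end -- that the attractor point may sit in $\Sing(\foD^{\mathrm{st}}_{(Q,W)})$ so one must argue perturbations detect all incoming walls -- is present but not discussed in the paper's proof either; as you note, it is resolved by the well-definedness of attractor invariants under perturbation together with the consistency of the diagram, which is the same implicit reliance as in the paper.
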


\begin{proof}
By Definition \ref{def_trivial_attractor_invariants} of trivial attractor DT invariants, we have $\Omega_{s_i}^{+,\star}=1$ and $\Omega_{ks_i}^{+,\star}=0$ for all $i \in I$ and $k \in \Z_{>1}$. 
By Remark \ref{rem_s_i}, it follows that, for all $\theta \in s_i^\perp$, $i \in I$, and $k \in \Z_{>1}$, we have $\Omega_{s_i,\theta}=1$ and $\Omega_{ks_i}^{+,\theta}=0$.
Applying \eqref{eq_dt_rational}, we obtain 
$\overline{\Omega}_{ks_i}^\theta=\frac{(-1)^{k-1}}{k^2}$ for all $k \in \Z_{\geq 1}$, and so, by \eqref{eq_stab_scattering}, we have
\[ f_{\foD_{(Q,W)}^\mathrm{st},\theta}
= \exp \left( \sum_{k \in \Z_{\geq 1}} k \,\,\frac{(-1)^{k-1}}{k^2}
z^{k s_i} \right) = 1+z^{s_i} \,,\]
for all $\theta \in s_i^\perp$. Moreover, for $i \in I$, we have $s_i \notin \ker \omega_Q$, and so we have $f_{\foD_{(Q,W)}^\mathrm{st},\theta}=f_{\overline{\foD}_{(Q,W)}^\mathrm{st},\theta}$ for all $\theta \in s_i^\perp$. Hence, there is a representative $\overline{\foD}$ of the equivalence class of $\overline{\foD}_{(Q,W)}^\mathrm{st}$ containing $(s_i^\perp, 1+z^{s_i})$ as incoming walls for all $i\in I$, that is, such that $\foD^{\mathrm{cl}}_{Q, \mathrm{in}} \subset \overline{\foD}$
by \eqref{eq_initial_cluster}.

By Definition \ref{def_trivial_attractor_invariants}, we also have $\overline{\Omega}_{\gamma}^{+,\star}=0$ for all $\gamma \in N_Q^+$ such that either $\gamma \neq s_i$ for some $i\in I$ or $\gamma \notin \ker \omega_Q$. It follows that $\overline{\foD}$ does not have any other incoming walls apart from $(s_i^\perp, 1+z^{s_i})$ for $i \in I$, that is, all walls in $\overline{\foD} \setminus \foD^{\mathrm{cl}}_{Q, \mathrm{in}}$ are non-incoming. Therefore, $\overline{\foD}$ is equivalent to the cluster scattering diagram $\foD^{\mathrm{cl}}_{Q}$ by the uniqueness part of \cite[Prop 3.3.2]{KS} (see also \cite[Thm 1.21]{GHKK}).
\end{proof}

\begin{corollary}
\label{cor: Windependence}
Let $Q$ be a quiver and $W$ and $W'$ be two potentials such that $(Q,W)$ and $(Q,W')$ have trivial attractor DT invariants. Then, DT invariants of $(Q,W)$ and $(Q,W')$ are the same for dimension vectors not in $\mathrm{Ker}\, \omega_Q$.
\end{corollary}

\begin{proof}
This is a direct consequence of Theorem \ref{thm_dt_cluster}, since both for $(Q,W)$ and $(Q,W')$ the stability scattering diagram, encoding the data of quiver DT invariants with dimension vectors not in $\mathrm{Ker}\, \omega_Q$, is equivalent to the cluster scattering diagram which does not depend on the choice of the potential $W$ or $W'$. 
\end{proof}

\section{Punctured Gromov--Witten invariants of cluster varieties}

\subsection{Cluster varieties}
\label{sec:cluster}

\begin{definition} \label{def_seed}
A \emph{symplectic seed} $\mathbf{s}=(N, (e_i)_{i \in I}, \omega)$ is the data of a finite rank free abelian group $N$, a collection of elements $e_i \in N$ indexed by a finite set $I$, and an integral skew-symmetric form 
\[ \omega \colon N \times N \rightarrow \Z\,,\]
such that $\ker \omega=0$, that is, such that $\omega \otimes \Q$ is non-degenerate.
\end{definition}

Given a symplectic seed $\mathbf{s}=(N, (e_i)_{i \in I}, \omega)$, we denote $M:=\Hom(N,\Z)$ and $M_\RR:=M \otimes \RR = \Hom (N,\RR)$, so that  $\omega \in \bigwedge^2 M$. As $\ker \omega =0$, it follows that the map 
\begin{align}\nu: N &\longrightarrow M \\
\gamma &\longmapsto \iota_\gamma \omega := \omega(\gamma, -) \nonumber
\end{align}
is injective and has finite cokernel. In particular, the image $\mathrm{Im}(\nu)$ is of finite index in $M$.
For every $i \in I$, we denote 
\[ v_i := \nu(e_i)=\iota_{e_i}
\omega = \omega(e_i,-) \in M\,.\]
In addition, we will make the following assumption:
\begin{equation}\label{eq_cluster_assumption}
 |v_i|=1 \,\,\text{for all}\,\, i\in I\,,
\end{equation}
where $|v_i|$ is the divisibility of $v_i$ in $M$.

We review the construction of a cluster variety starting from a symplectic seed $\mathbf{s}=(N, (e_i)_{i \in I}, \omega)$ satisfying \eqref{eq_cluster_assumption}. Let $\Sigma$ be a fan in $M_\RR$ containing the rays $\RR_{\geq 0}v_i$ for all $i\in I$, and such that the corresponding toric variety $X_\Sigma$ is smooth and projective. Such a fan always exists by toric resolution of singularities \cite[Thm 11.1.9]{toric_book} and the toric Chow lemma 
\cite[Thm 6.1.18]{toric_book}.
For every $i\in I$, we denote by $\overline{D}_i$ the toric divisor of $X_\Sigma$ corresponding to the ray $\RR_{\geq 0} v_i$ of $\Sigma$. 
For every $i \in I$, fix $t_i \in \kk^{\star}$, and define the hypersurface $H_i \subset \overline{D}_i$ as the closure in $\overline{D}_i$ of the locus of equation 
\begin{equation} \label{eq_H_i}
1+t_i z^{e_i}=0\,.\end{equation}
Up to refining $\Sigma$, we can assume that $H_i$ is smooth for every $i\in I$.
Moreover, 
we assume that for every $i,j \in I$ , we have 
\begin{equation} \label{eq_H_int}
H_i \cap H_j =\emptyset \,.
\end{equation}
Note that if $\RR_{\geq 0}v_i \neq \RR_{\geq 0}v_j$
for all $i,j \in I$, then up to refining $\Sigma$, one can assume that $\overline{D}_i \cap \overline{D}_j=\emptyset$ for all $i,j \in I$, and so that \eqref{eq_H_int} is satisfied. If $\dim X=2$, and $t_i \neq t_j$ for all $i, j \in I$, then 
\eqref{eq_H_int} is also satisfied.
Both assumptions \eqref{eq_cluster_assumption} and
\eqref{eq_H_int} will be necessary to apply the main result of \cite{HDTV}. We expect that these technical assumptions can be dropped after an appropriate generalization of \cite{HDTV}.

Let $X$ be the blow-up of $X_\Sigma$ along the codimension two locus 
\[ H :=\bigcup_{i\in I} H_i\,,\]
and let $D \subset X$ be the strict transform of the toric boundary divisor  $D_\Sigma$ of $X_\Sigma$. As $H$ is smooth, it follows that $X$ is also smooth and $D$ is a simple normal crossing divisor in $X$.

For every $i \in I$, we denote by $D_i$ the irreducible component of $D$ obtained as strict transform of the toric divisor $\overline{D}_i$ of $X_\Sigma$.
We refer to the complement $U=X \setminus D$ as the \emph{cluster variety} defined by the symplectic seed $\mathbf{s}$, and $(X,D)$ as a \emph{log Calabi-Yau compactification} of the cluster variety. As we will always consider the pair $(X,D)$ up to locally trivial deformations, we suppress the parameters $t_i$ from the notation and from the terminology. The skew-symmetric form $\omega$ on $N$ naturally defines a Poisson structure on $U$, which is in fact, by the assumption $\ker \omega=0$, non-degenerate and defines a holomorphic symplectic form on $U$ with first order poles along $D$.

\begin{remark}  \label{remark_finite_group}
In the usual theory of cluster varieties \cite{FG2, GHKbirational}, 
a (skew-symmetric) \emph{seed} $\tilde{\mathbf{s}} =(\tilde{N}, (\tilde{e}_i)_{i\in I}, \tilde{\omega})$
consists of a finite abelian group $\tilde{N}$, a basis $(\tilde{e}_i)_{i\in I}$ of $\tilde{N}$, and a skew-symmetric form $\tilde{\omega}$.
Given a symplectic seed $\mathbf{s}=(N, (e_i)_{i \in I}, \omega)$ as in Definition \ref{def_seed}, one can define a seed $\tilde{\mathbf{s}} =(\tilde{N}, (\tilde{e}_i)_{i\in I}, \tilde{\omega})$ by $\tilde{N}:=\bigoplus_{i\in I}\Z \tilde{e}_i$, and $\tilde{\omega}=\pi^\star \omega$, where $\pi$ is the projection 
\begin{align*}
\pi: \tilde{N} \longrightarrow N \\ 
\tilde{e}_i \longmapsto e_i \,.
\end{align*}
If $(e_i)_{i \in I}$ generate $N$, then $N$ is exactly the quotient of $\tilde{N}$ by the kernel $\ker \tilde{\omega}$. In this case, $U$ is a symplectic fiber of the Poisson $\mathcal{X}$ cluster variety defined by the seed $\tilde{\mathbf{s}}$ \cite{FG2, GHKbirational}.
In general, $N$ contains $\tilde{N}/
\ker \tilde{\omega}$ as a sublattice of finite index, and $U$ is a quotient by the finite group $N/\pi(\tilde{N})$ of a symplectic fiber of the Poisson $\mathcal{X}$ cluster variety defined by $\tilde{\mathbf{s}}$.

\end{remark}

\subsection{Punctured Gromov--Witten invariants}
\label{sec_gw}

Given a log Calabi-Yau pair, Gross--Siebert define in \cite{gross2021canonical}
punctured Gromov--Witten invariants indexed by wall types. Using these invariants, they define the canonical scattering diagram which can be used to construct the mirror geometry.
In this section, we briefly review the definition of these punctured Gromov--Witten invariants in the particular setting of log Calabi-Yau compactifications of cluster varieties satisfying assumptions \eqref{eq_cluster_assumption}-\eqref{eq_H_int} of \S\ref{sec:cluster}.

Let $(X,D)$ be a $d$-dimensional log Calabi-Yau compactification of a cluster variety satisfying assumptions \eqref{eq_cluster_assumption}-\eqref{eq_H_int} of \S\ref{sec:cluster}.
Let $(B, \mathscr{P})$ be the tropicalization of $(X,D)$ as in \cite[\S 2.1.1]{HDTV}: $\mathscr{P}$ is a collection of cones, containing a cone $\RR_{\geq 0}^r$ for each codimension $r$ stratum of $(X,D)$, and $B$ is the topological space obtained by gluing together the cones of $\scrP$ according to the incidence relations between strata. By construction, one can view $\mathscr{P}$ as a decomposition of $B$ into a union of cones. 
Let $\Delta \subset B$ be the union of codimension $2$ cones of $\mathscr{P}$. Then, as reviewed in \cite[(2.4)]{HDTV}, there is a natural integral affine structure
on the complement $B_0:=B 
\setminus \Delta$ of $\Delta$ in $B$.
As described in \cite[Thm 3.4]{HDTV}, this integral affine structure actually extends over the complement of a smaller discriminant locus, but we will not use this fact.
For every cone $\sigma \in \mathscr{P}$, we denote by $\Lambda_\sigma$ the space of integral tangent vectors to $\sigma$, and for every point $x \in B_0$, we denote by $\Lambda_x$ the space of integral tangent vectors to $B_0$ at $x$.

Following \cite{ACGS, ACGSI, gross2021canonical}, we review below the necessary tropical language to define moduli spaces of punctured maps.

\begin{definition}
A \emph{tropical type} is the data of a triple $\tau=(G, \boldsymbol{\sigma}, \mathbf{u})$, where:
\begin{itemize}
\item[(i)] $G$ is a graph, with set of vertices $V(G)$, set of edges $E(G)$, and set of legs $L(G)$, 
\item[(ii)] $\boldsymbol{\sigma}$ is a map 
\[ \boldsymbol{\sigma}: V(G) \cup E(G) \cup L(G) \longrightarrow \mathscr{P}\,,\]
such that $\boldsymbol{\sigma}(v) \subset \boldsymbol{\sigma}(E)$ if $v$ is a vertex of an edge or leg $E$, 
\item[(iii)] $\mathbf{u}$ is a map 
\[ \mathbf{u}: \{ (V,E)\,|\, V \in V(G) \,, E \in E(G) \cup L(G)\,, V \in \partial E \} \longrightarrow \bigcup_{\sigma \in \mathscr{P}} \Lambda_\sigma \,,\]
such that $\mathbf{u}(V,E) \in \Lambda_{\boldsymbol{\sigma}(E)}$ for all $(V,E)$ such that $V \in \partial E$, and $\mathbf{u}(V_2,E)=-\mathbf{u}(V_1,E)$ if $E \in E(G)$ and $\partial E=\{V_1, V_2\}$.
\end{itemize}
\end{definition}

\begin{definition}
An \emph{abstract tropical curve} is a pair $(G,\ell)$
consisting of a graph $G$ and of length functions $\ell: E(G) \cup L(G) \rightarrow \RR_{\geq 0} \cup \{\infty\}$ such that $\ell(E) \neq \infty$ for all $E\in E(G)$.
\end{definition}

We view an abstract tropical curve $(G,\ell)$ as a metric graph, with edges $E \in E(G)$ identified with the line segment $[0,\ell(E)]$, and with legs $L \in L(G)$ identified with the line segment $[0,\ell(L)]$ if $\ell(L)\neq \infty$, and with the half-line $[0,+\infty)$
if $\ell(L)=\infty$.

\begin{definition} \label{def_tropical_map}
Let $\tau=(G,\boldsymbol{\sigma}, \mathbf{u})$ be a tropical type. A \emph{tropical map} to $(B,\mathscr{P})$ of type $\tau$ is the data of an abstract tropical curve $(G,\ell)$ and of a map $h \colon G \rightarrow B$ such that
\begin{itemize}
\item[(i)] $h(v) \in \Int(\boldsymbol{\sigma}(v))$ for all $v \in V(G)$
\item[(ii)] for all $E \in E(G) \cup L(G)$, $h(\Int(E)) \in \Int(\boldsymbol{\sigma}(E))$ 
and $h|_E: E \rightarrow \boldsymbol{\sigma}(E)$ is an affine linear map.
\item[(iii)] for all $E \in E(G)$ with $\partial E=\{V_1, V_2\}$, we have $h(V_2)-h(V_1)=\ell(E) \mathbf{u}(V_1,E)$.
\item[(iii)] for all $L \in L(G)$ with $\partial L=\{V\}$, we have $h(L)=V+[0,\ell(L)]u(V,L)=\boldsymbol{\sigma}(L) \cap (V+\RR_{\geq 0} u(V,L))$ if $\ell(L)\neq \infty$, and $h(L)=V+\RR_{\geq 0} u(V,L)$ if $\ell(L) =\infty$.
\end{itemize}
\end{definition}

\begin{definition}
A tropical type $\tau$ is \emph{realizable} if there exists a tropical map to $(B,\mathscr{P})$ of type $\tau$.
\end{definition}

\begin{definition} \label{def_basic_monoid}
Given a realizable tropical type $\tau=(G,\boldsymbol{\sigma}, \mathbf{u})$, the \emph{basic monoid} of $\tau$ is the monoid $Q_\tau:=\Hom(Q_\tau^\vee, \NN)$, where
\[ Q_\tau^\vee := \left\{ ((p_V)_V, (\ell_E)_E) \in 
\prod_{V\in V(G)} \boldsymbol{\sigma}(v)_\Z 
\times \prod_{E\in E(G)}\NN \,\, \Bigg | \, \,
p_{V_2}-p_{V_1}=\ell_E \mathbf{u}(V_1,E)\, ,\forall E \in E(\G) \right\}
\] 
where $\boldsymbol{\sigma}(v)_\Z$ is the set of integral points of the cone $\boldsymbol{\sigma}(v)$  and $\partial E=\{V_1, V_2\}$.
\end{definition}\

By construction, the cone $Q_{\tau,\RR}^\vee :=\Hom(Q_\tau , \RR_{\geq 0})$ parametrizes  the tropical maps to $(B, \mathscr{P})$ of type $\tau$: for every $s =((p_V)_V, (\ell_E))\in Q_{\tau,\RR}^\vee $, the corresponding tropical map is defined by $\ell_s(E):=\ell_E$ and $h_s(V)=p_V$.

\begin{definition} \label{def_wall_type}
A \emph{wall type} is a realizable tropical type $\tau=(G, \boldsymbol{\sigma}, \mathbf{u})$ such that: 
\begin{itemize}
    \item[(i)] the graph $G$ is connected of genus zero, with a single leg: $L(G)=\{L_{\mathrm{out}}\}$.
    \item[(ii)] $\dim Q_{\tau,\RR}^\vee=d-2$ and $\dim \cup_{s \in Q_{\tau,\RR}^\vee} h_s(L_{\mathrm{out}})=d-1$, where $h_s: G \rightarrow B$ are the universal tropical maps indexed by $s \in Q_{\tau,\RR}^\vee$.
    \item[(iii)] $\tau$ is balanced: for every vertex $V \in V(G)$ such that $\dim \boldsymbol{\sigma}(V)=d$ or $d-1$, with incident edges or legs $E_1,\dots, E_m$, then, for every point $x \in \Int(\boldsymbol{\sigma}(V))$, one can view $u(V,E_1),\dots, u(V,E_m)$ as elements of $\Lambda_x$, and we have $\sum_{i=1}^m u(V,E_i)=0$ in $\Lambda_x$.
\end{itemize}
\end{definition}

Given a wall type $\tau$, we denote 
\begin{equation}\nonumber
u_\tau :=
u(V_{\mathrm{out}}, L_{\mathrm{out}}) \in \Lambda_{\boldsymbol{\sigma}(L_{\mathrm{out}})}
\,,
\end{equation}
where $V_{\mathrm{out}}$ is the vertex adjacent to the leg $L_{\mathrm{out}}$.
It follows from Definition \ref{def_wall_type}(ii) that $u_\tau \neq 0$.

We refer to \cite{ACGS} for the details of the general theory of punctured maps. A punctured map to $(X,D)$ is a diagram 
\[\begin{tikzcd}
C^{\circ}
\arrow[r, "f"]
\arrow[d]
&
X\\
W& 
\end{tikzcd}\]
in the category of log schemes, where $X$ is equipped with the divisorial log structure defined by $D$, the base $W$ is a log point, and $C^{\circ}$ is a puncturing of log curve $C \rightarrow W$ as in \cite[\S 2.1]{ACGS}. In particular, the underlying curve $\underline{C}$ obtained by forgetting the log structure is nodal, and a punctured map $f: C/W \rightarrow X$ is called stable if the underlying morphism of schemes $\underline{f}:\underline{C} \rightarrow X$ is a stable map. The tropicalization of a punctured map
$f: C/W \rightarrow X$ gives a diagram 
\[\begin{tikzcd}
\Sigma(C)
\arrow[r, "\Sigma(f)"]
\arrow[d]
&
\Sigma(X)=(B,\mathscr{P})\\
\Sigma(W)& 
\end{tikzcd}\]
which is a family of tropical maps to $(B,\scrP)$ as in Definition \ref{def_tropical_map}, whose generic tropical type $\tau=(G,\mathbf{\sigma},\mathbf{u})$ is called the type of $f: C/W \rightarrow X$. In particular, the graph $G$ is the dual graph of $C$, with vertices (resp.\ edges, legs) corresponding to the irreducible components (resp.\ nodes, marked points of $C$), the map $\mathbf{\sigma}$
specifies the strata of $(X,D)$ containing the images by $f$ of the components, nodes and marked points of $C$, and the map $\mathbf{u}$ gives the contact orders of $f$ at the nodes and marked points encoded by the log structures. Finally, the punctured map $f: C/W \rightarrow X$ is called \emph{basic} if 
$\Sigma(W)=Q_{\tau,\RR}^\vee$ and $\Sigma(C) \rightarrow \Sigma(W)$ is the universal family of tropical maps of type $\tau$, where $Q_\tau$ is the basic monoid of $\tau$ as in Definition \ref{def_basic_monoid}.

Let $N_1(X)$ be the abelian group of curve classes in $X$ modulo numerical equivalence. For each wall type $\tau$ and class $\beta \in N_1(X)$, there is a moduli space $\mathscr{M}_\tau(X,\beta)$ of basic stable punctured maps to $(X,D)$, of class $\beta$, and whose type admits a contraction morphism to $\tau$ \cite[\S 2.1]{gross2021canonical}. In particular, for such a punctured map $f:C^{\circ}/W \rightarrow X$, the log curve $C$ has a single marked point corresponding to the single leg $L_{\mathrm{out}}$ of $G$, with contact order $u_\tau$. Moreover, 
the underlying morphism of schemes is a genus zero stable map to $X$.
By \cite[Lemma 3.9]{gross2021canonical}, $\mathscr{M}_\tau(X,\beta)$ is a proper Deligne-Mumford stack and carries a natural zero-dimensional virtual fundamental class 
$[\mathscr{M}_\tau(X,\beta)]^\virt$. 
The corresponding punctured Gromov--Witten invariant is 
\begin{equation}\label{eq_gw}
N_{\tau,\beta}^{(X,D)}:=  \deg [\mathscr{M}_\tau(X,\beta)]^\virt \in \QQ\,.\end{equation}

Let $\Lambda_{\tau_{\mathrm{out}}}$ be the space of integral tangent vectors to the $(d-1)$-dimensional cone 
\begin{equation}\nonumber
\tau_{\mathrm{out}}:= \bigcup_{s \in Q_{\tau,\RR}^\vee} L_{\mathrm{out}}
\end{equation}
formed by the universal family of legs $L_{\mathrm{out}}$ over $Q_{\tau, \RR}^\vee$. The derivative of the universal tropical map $h=(h_s)_{s\in Q_{\tau,\RR}^\vee}$ induces a map 
$h_{*}: \Lambda_{\tau_{\mathrm{out}}} 
\rightarrow \Lambda_{\boldsymbol{\sigma}
(L_{\mathrm{out}})}$. The \emph{coefficient} $k_\tau$ is defined as the order of the finite torsion subgroup of the quotient $\Lambda_{\boldsymbol{\sigma}(L_{\mathrm{out}})} / h_{*}(\Lambda_{\tau_{\mathrm{out}}})$:
\begin{equation} \label{eq_coeff}
k_\tau:= | \left( \Lambda_{\boldsymbol{\sigma}(L_{\mathrm{out}})} / h_{*}(\Lambda_{\tau_{\mathrm{out}}})\right)_{\mathrm{tors}}|
\,.\end{equation}

\subsection{HDTV scattering diagram} \label{sec: hdtv}
For any log Calabi-Yau pair $(X,D)$, Gross--Siebert \cite{gross2021canonical} showed that the punctured Gromov--Witten invariants defined by \eqref{eq_gw} naturally define a canonical scattering diagram in the
tropicalization $(B,\mathscr{P})$
of $(X,D)$.
In joint work of the first author with Gross \cite{HDTV}, it was shown that, for log Calabi-Yau pairs obtained as blow-ups of toric varieties, and so in particular for the log Calabi--Yau compactifications of cluster varieties as in \S\ref{sec:cluster}, the canonical scattering diagram can be calculated from an entirely combinatorial scattering diagram in a vector space, referred to as the \emph{HDTV scattering diagram} in what follows.
In this section, we review the definition of the HDTV scattering diagram. The main result of \cite{HDTV} relating punctured Gromov--Witten invariants and the HDTV scattering diagram will be reviewed in the next section.

Let $N$ be a finite rank free abelian group and $(e_i)_{i \in I}$ a finite collection of elements $e_i \in N$ indexed by a finite set $I$. As in \S\ref{sec:cluster}, we denote $M:=\Hom(N,\Z)$ and $M_\RR :=M \otimes \RR =\Hom (N,\RR)$. 
Let $\QQ[M][\![(t_i)_{i\in I}]\!]$ be the algebra of power series in variables $t_i$ indexed by $i \in I$ and with coefficients in $\QQ[M]$. We denote by $\mathfrak{m}$ the ideal of $\QQ[M][\![(t_i)_{i\in I}]\!]$ generated by the variables $t_i$ for all $i \in I$.
We recall below the notion of scattering diagram in $M_\RR$ following \cite[\S 5]{HDTV}.

\begin{definition} \label{def_wall}
Fix a primitive $m_0 \in M \setminus \{0\}$.
A \emph{wall} in $M_{\RR}$ of direction $-m_0$ is a pair $(\fod, f_\fod)$, where 
\begin{itemize}
    \item[(i)] $\fod$ is a codimension one convex rational cone in $M_{\RR}$ contained in an hyperplane of the form $n_0^\perp$, with $n_0 \in N \setminus \{0\}$ primitive such that $m_0 \in n_0^{\perp}$.
    \item[(ii)] $f_\fod \in \Q[z^{m_0}][\![(t_i)_{i\in I}]\!] \subset \Q[M][\![(t_i)_{i\in I}]\!]$ with $f_\fod =1\! \mod \mathfrak{m}$, that is,  $f_\fod$ is a power series of the form 
    \begin{equation} \label{eq_wall}
     f_\fod =1+\sum_{\substack{m \\\in \Z_{\geq 0}m_0}} \sum_{\substack{\mathbf{A}=(a_i)_i \\ \in \NN^I \setminus \{0\}}} c_{m,\mathbf{A}} z^m \prod_{i\in I} t_i^{a_i}
    \end{equation}
    with $c_{m,\mathbf{A}}\in \QQ$, and for all $\mathbf{A}$, only finitely many $m$ such that $c_{m,\mathbf{A}}\neq 0$.
\end{itemize}
\end{definition}

\begin{definition} \label{def_incoming_wall}
A wall $(\fod, f_\fod)$ in $M_\RR$ of direction $-m_0$ is \emph{incoming} if
$m_0 \in \fod$.
\end{definition}

\begin{definition} \label{def_scattering}
A \emph{scattering diagram} in $M_\RR$ is a set $\foD=\{(\fod, f_\fod)\}$ of walls in $M_\RR$ such that for every $k \in \Z_{\geq 0}$, there exist only finitely many walls $(\fod, f_\fod) \in \foD$ with $f_\fod \neq 1 \mod \mathfrak{m}^k$. 
\end{definition}

Let $\foD=\{(\fod, f_\fod) \}$ be a scattering diagram in $M_\RR$. We define the \emph{support} of $\foD$ by
\begin{equation}\nonumber
\mathrm{Supp}(\foD)=\bigcup_{\fod \in \foD} \fod \subset M_\RR \,, 
\end{equation}
and the
\emph{singular locus} of $\foD$ by 
\begin{equation} \nonumber
\mathrm{Sing}(\foD)=\bigcup_{\fod \in \foD} \partial \fod \cup \bigcup_{\fod, \fod' \in \foD} \fod \cap \fod' \subset M_\RR \,,\end{equation}
where $\partial \fod$ denotes the boundary of a wall $\fod$, and the last union is over the pairs of walls $\fod, \fod'$ such that $\mathrm{codim} \fod \cap \fod' \geq 2$.
For every point $x\in M_\RR \setminus \Sing(\foD)$, we define
\begin{equation}
f_{\foD,x}:= \prod_{\substack{\fod \in \foD\\ x \in \fod}} f_{\fod} \,,
\end{equation}
where the product is over all the walls of $\foD$ containing $x$.
Two scattering diagrams $\foD$ and $\foD'$ are called \emph{equivalent} if 
\begin{equation} \nonumber
f_{\foD,x}=f_{\foD', x}
\end{equation}
for all $x \in M_\RR \setminus (\mathrm{Sing}(\foD) \cup \mathrm{Sing}(\foD'))$.

Given a path
 \begin{align*} \alpha: [0,1] &\longrightarrow M_{\RR} \setminus \mathrm{Sing}(\foD)\\ 
 \tau &\longmapsto \alpha(\tau)\,\end{align*}
intersecting transversally $\Supp(\foD)$, and $\tau \in [0,1]$ such that $\alpha(\tau)\in \Supp(\foD)$, we define an automorphism of $\Q[\![(t_i)_{i\in I}]\!]$-algebras 
\begin{align} \label{eq_autom}
\mathfrak{p}_{\foD, \alpha, \tau} : \Q[M][\![(t_i)_{i\in I}]\!] &\longrightarrow \Q[M][\![(t_i)_{i\in I}]\!]\\
z^m &\longmapsto f_{\foD,\alpha(\tau)}^{\langle n_{\alpha,\tau},m\rangle} z^m\,,
\nonumber
\end{align}
where $n_{\alpha,\tau}$ is the unique primitive element in $N$ such that $\fod \subset n_{\alpha,\tau}^\perp$ for all walls $\fod$ containing $\alpha(\tau)$, and 
$\langle n_{\alpha,\tau}, \alpha'(\tau) \rangle <0$. 
The path ordered automorphism 
$\mathfrak{p}_{\foD, \alpha}$ is the product of automorphisms
$\mathfrak{p}_{\foD, \alpha, \tau}$ for all $\tau \in [0,1]$ such that $\alpha(\tau)\in \Supp(\foD)$, and ordered following increasing values of $\tau$.
Finally, a scattering diagram is called \emph{consistent} 
if for any path $\alpha$ in $M_\RR \setminus \mathrm{Sing}(\foD)$ with $\alpha(0)=\alpha(1)$ 
the associated path ordered automorphism 
$\mathfrak{p}_{\foD, \alpha}$ is the identity.

Let $\mathbf{s}=(N, (e_i)_{i\in I}, \omega)$ be a symplectic seed and $\Sigma$ be a fan as in \S\ref{sec:cluster}. 
For every $i \in I$, we denote by $\Sigma_i$ the fan in $M_\RR /\RR v_i$ defined by 
\begin{equation} \label{eq_S_i}
\Sigma_i := \{ (\sigma + \RR v_i)/\RR v_i \,|\, \sigma \in \Sigma\,, v_i \in \sigma \} \,.
\end{equation}
It is the fan of the toric variety $D_i \simeq \overline{D}_i$.
For every codimension one cone $\underline{\rho} \in \Sigma_i$, there exists a unique codimension one cone $\rho \in \Sigma$ containing $v_i$ such that 
$\underline{\rho}=(\rho + \RR v_i)/\RR v_i$.
Finally, we denote by $\widetilde{\Sigma}_i^{[1]}$ the set of codimension one cones $\underline{\rho} \in \Sigma_i$
such that $\rho \subset e_i^{\perp}$.

\begin{definition}
The \emph{initial HDTV scattering diagram} $\foD_{\mathbf{s},\Sigma,\mathrm{in}}$
of a symplectic seed $\mathbf{s}$ and a fan $\Sigma$ as in \S \ref{sec:cluster} is the scattering diagrams in $M_\RR$ with set of walls
\begin{equation} \label{eq_D_in}
\foD_{\mathbf{s},\Sigma,\mathrm{in}} := \{(\rho, 1+t_i z^{v_i})\}_{i\in I, \underline{\rho} \in \widetilde{\Sigma}_i^{[1]}} \,.\end{equation}
\end{definition}

Note that all walls of $\foD_{\mathbf{s},\Sigma,\mathrm{in}}$ are incoming as in Definition \ref{def_incoming_wall} because $v_i \in \rho$ for all $\underline{\rho} \in \widetilde{\Sigma}_i^{[1]}$. 
By \cite[Thm 5.6]{HDTV}, there exists a consistent scattering diagram 
\begin{equation} \nonumber
\foD_{\mathbf{s},\Sigma}
\end{equation}
in $M_\RR$
containing the initial HDTV scattering diagram $\foD_{\mathbf{s},\Sigma,\mathrm{in}}$, and such that every wall $\fod \in \foD_{\mathbf{s},\Sigma} \setminus  \foD_{\mathbf{s},\Sigma,\mathrm{in}}$ is non-incoming. Moreover, such a scattering diagram is unique up to equivalence. We refer to $\foD_{\mathbf{s},\Sigma}$ as the \emph{HDTV scattering diagram} of a symplectic seed $\mathbf{s}$ and a fan $\Sigma$ as in \S \ref{sec:cluster}.

For every point $x\in M_\RR \setminus \Sing(\foD_{\mathbf{s},\Sigma})$, we denote
\begin{equation}
\label{Eq: fout}
f^{\mathrm{out}}_{\foD_{\mathbf{s},\Sigma},x} := \prod_{\substack{\fod \in \foD_{\mathbf{s},\Sigma}\setminus \foD_{\mathbf{s}, \Sigma, \mathrm{in}}\\ x \in \fod}} f_\fod \,,
\end{equation}
where the product is over all the walls of the HDTV scattering diagram containing $x$ and which are not initial walls, and 
\begin{equation}
\label{Eq: fin}
f^{\mathrm{in}}_{\foD_{\mathbf{s},\Sigma},x} := \prod_{\substack{\fod \in \foD_{\mathbf{s},\Sigma,\mathrm{in}}\\ x \in \fod}} f_\fod \,,
\end{equation}
where the product is over all initial walls containing $x$.

\subsection{Scattering calculation of punctured Gromov--Witten invariants} \label{sec: scattering_calculation}
In this section, we state and prove Theorem \ref{thm_hdtv}, a version of the main result of \cite{HDTV} relating HDTV scattering diagrams and punctured Gromov--Witten invariants.
Let $(X,D)$ be a $d$-dimensional log Calabi-Yau compactification of a cluster variety satisfying assumptions \eqref{eq_cluster_assumption}-\eqref{eq_H_int} of \S\ref{sec:cluster}.
Recall that $(X,D)$ is obtained as a blow-up of a toric variety $(X_\Sigma, D_\Sigma)$ and that we denote by $(B, \mathscr{P})$ be the tropicalization of $(X,D)$.

Each cone of the fan $\Sigma$ in $M_\RR$ corresponds to a stratum of $(X_\Sigma, D_\Sigma)$, whose strict transform in $(X,D)$
is a stratum
corresponding to an isomorphic cone of $\mathscr{P}$ in $B$. Gluing these identifications between cones of $\Sigma$ and cones of $\mathscr{P}$, we obtain as in 
\cite[\S 6]{HDTV} a piecewise-linear isomorphism
\begin{align} \label{eq_Upsilon}
\Upsilon:(M_{\RR},\Sigma)&\xlongrightarrow{\sim} (B,\mathscr{P}) \\
x &\longmapsto \Upsilon(x) \nonumber
\end{align}
between topological spaces endowed with a decomposition in integral cones.

We introduce notations that will be used to describe incoming walls of HDTV scattering diagrams in Theorem \ref{thm_hdtv}. 
For every $i \in I$, let $\mathcal{E}_i$ be the exceptional divisor in $X$ obtained as the preimage of $H_i$ by the blow-up morphism 
\begin{equation}\label{eq_bl}
\mathrm{Bl}: X  \longrightarrow X_\Sigma\,.\end{equation}
The projection $\mathcal{E}_i \rightarrow H_i$ is a $\PP^1$-fibration and we denote by $[E_i] \in N_1(X)$ the class of a $\PP^1$-fiber.
Every $\underline{\rho} \in \widetilde{\Sigma}_i^{[1]}$ as below \eqref{eq_S_i}
defines a 0-dimensional toric stratum $x_\rho$ of the toric variety $H_i$, and we denote by $E_\rho$ the $\PP^1$-fiber of $\mathcal{E}_i \rightarrow H_i$ above the point $x_\rho$.
The tropicalization of $E_\rho \simeq \PP^1$ endowed with the restriction of the log structure of $X$ is a wall type $\tau_{\rho}$
as in Definition \ref{def_wall_type}, where the graph $G$ consists of a unique vertex adjacent to a single leg, $Q_{\tau_\rho,\RR}^{\vee} = \underline{\rho}$, $u_{\tau_{\rho}} = v_i$ and \[\bigcup_{s \in Q_{\tau_\rho,\RR}^{\vee}} h_s(L_{\mathrm{out}})=\Upsilon(\rho)\,.\]

We now introduce notations that will be used to describe non-incoming walls of HDTV scattering diagrams in Theorem \ref{thm_hdtv}.
Fix $\mathbf{A}=(a_i)_{i\in I}\in \NN^I$
and a point $x \in \Supp(\foD_{\mathbf{s},
\Sigma}) \setminus \Sing(\foD_{\mathbf{s},\Sigma})$.
Then, there exists a unique wall $\fod_x$ of $\foD_{\mathbf{s},
\Sigma}$ containing $x$, and we denote by $W_x$ the unique hyperplane in $M_\RR$ containing $\fod_x$. 
We define
\[
m_{\mathbf{A}}:= -\sum_{i\in I}a_i v_i \in M\,,
\]
\[
\mathcal{A}_x := \{ \mathbf{A}=(a_i)_{i\in I}\in \NN^I \,|\, m_{\mathbf{A}} \in W_x \} \subset \NN^I\,,
\]
and we denote by $\mathcal{T}_{\mathbf{A}}^x$ the set of 
wall types $\tau$ as in Definition \ref{def_wall_type}
such that $x \in h(\tau_{\mathrm{out}})$ and $\Upsilon_{*} m_{\mathbf{A}} = u_{\tau}$.

Let $\sigma_x$ be the smallest cone of $\Sigma$ containing $x$. As $x \notin \Sing(\foD_{\mathbf{s},
\Sigma})$, it follows that $\sigma_x$ is of codimension one or zero in $M_\RR$ and the linear span of $\sigma_x$ contains the hyperplane $W_x$.
As the fan $\Sigma$ is smooth, it follows that $\sigma_x$ is a regular cone, and so the 
primitive generators $m_1^{\sigma_x}, \dots, m_{\ell}^{\sigma_x}$
of the rays of $\sigma_x$ form a basis of 
the $\Z$-linear span of $\sigma_x$.
Therefore, for every $\mathbf{A} \in \mathcal{A}_x$, there exists unique 
integers $b_1, \dots, b_\ell \in \Z$ such that 
\[ m_{\mathbf{A}}=\sum_{j=1}^{\ell} b_j m_j^{\sigma_x}\,.\]
For every $1 \leq j \leq \ell$, let $D_j^{\sigma_x}$ be the toric divisor of 
$X_\Sigma$ corresponding to the ray
$\RR_{\geq 0} m_j^{\sigma_x}$ of $\Sigma$.
By standard toric geometry, there  exists a unique curve class $\bar{\beta}_{\mathbf{A}}^x \in N_1(X_\Sigma)$
such that, for every toric divisor $D'$ of $X_\Sigma$, we have 
\[ \bar{\beta}_{\mathbf{A}}^x \cdot D' =\sum_{i \in I_{D'}} a_i 
+ \sum_{j \in J_{D'}} b_j\,, \]
where
\[ I_{D'} = \{ i \in I \,|\, D_i=D'\}\,\,\,\, \text{and}\,\,\,\,\,J_{D'}=\{ 1\leq j \leq \ell\,|\, D_j^\sigma=D'\}\,.\]
Finally, define the curve class
\begin{equation} \label{eq_curve_class}
\beta_{\mathbf{A}}^x := \mathrm{Bl}^{*} \bar{\beta}_{\mathbf{A}}^x 
- \sum_{i\in I} a_i [E_i] \in N_1(X)\,.\end{equation}

\begin{theorem} \label{thm_hdtv}
Let $\mathbf{s}$ be a symplectic seed, $\Sigma$ a fan, and $(X,D)$ a log Calabi-Yau compactification of the corresponding cluster variety satisfying assumptions \eqref{eq_cluster_assumption} and \eqref{eq_H_int}.
Then, for every point $x \in \Supp(\foD_{\mathbf{s},\Sigma}) \setminus \Sing(\foD_{\mathbf{s},\Sigma})$, the functions $f_{\foD_{\mathbf{s},\Sigma},x}^{\mathrm{in}}$ and $f_{\foD_{\mathbf{s},\Sigma}, x}^{\mathrm{out}}$ defined as in \eqref{Eq: fin} and \eqref{Eq: fout} respectively are given as follows:
\begin{equation}\nonumber
f_{\foD_{\mathbf{s},\Sigma},x}^{\mathrm{in}}
=
\exp \left(
\sum_{\substack{i\in I, \underline{\rho} \in \widetilde{\Sigma}_i^{[1]}\\ x\in \rho}} \sum_{\ell \geq 1} \ell k_{\tau_\rho} N_{\tau_{\rho}, \ell}^{(X,D)} z^{\ell v_i} t_i^{\ell}
\right)
\end{equation}
and
\begin{equation} \nonumber
f_{\foD_{\mathbf{s},\Sigma}, x}^{\mathrm{out}}
= \exp \left(  
\sum_{\mathbf{A}=(a_i)_i \in \mathcal{A}_x}
\sum_{\tau \in \mathcal{T}_{\mathbf{A}}^x} 
k_\tau
N_{\tau,\beta_{\mathbf{A}}^x}^{(X,D)}  z^{\sum_{i\in I} a_i v_i} \prod_{i \in I} t_i^{a_i}
\right) \,.\end{equation}
where $ N_{\tau_{\rho}, \ell[E_i]}^{(X,D)}$ and $N_{\tau,\beta_{\mathbf{A}}^x}^{(X,D)} $ are punctured log Gromov--Witten invariants of $(X,D)$ defined as in \eqref{eq_gw} and $k_{\tau_\rho}, k_\tau$ are coefficients defined as in \eqref{eq_coeff}.
\end{theorem}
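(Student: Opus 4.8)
The strategy is to deduce this theorem from the main result of \cite{HDTV}, which already establishes a precise dictionary between the walls of the HDTV scattering diagram $\foD_{\mathbf{s},\Sigma}$ and punctured Gromov--Witten invariants of $(X,D)$ via the tropicalization $(B,\mathscr{P})$ and the piecewise-linear isomorphism $\Upsilon$ of \eqref{eq_Upsilon}. The task here is essentially one of bookkeeping: transport the statement of \cite[Thm 5.6 \& \S 6]{HDTV} through $\Upsilon$, organize the contributions wall-by-wall according to whether a wall is incoming or not, and identify the curve classes and coefficients with those introduced above in \eqref{eq_curve_class} and \eqref{eq_coeff}.

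First I would recall the output of \cite{HDTV}: the canonical scattering diagram of $(X,D)$ in $(B,\mathscr{P})$ has wall functions whose logarithms are generating series $\sum_\tau k_\tau N_{\tau,\beta}^{(X,D)} z^{u_\tau} t^{\beta}$ summed over wall types $\tau$ whose outgoing ray passes through the given point, with the monomial $z^{u_\tau}$ and the $t$-exponent recording the contact data and the curve class. Pulling this back along $\Upsilon$ identifies it with $\foD_{\mathbf{s},\Sigma}$; since $\Upsilon$ is integral-linear on each cone, $\Upsilon_*$ matches tangent vectors on $B_0$ with elements of $M$, and matches the class $[E_i]$ of the $\PP^1$-fiber of $\mathcal{E}_i \to H_i$ with the variable $t_i$. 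This is exactly the content needed so that a wall type $\tau$ contributes to the point $x$ precisely when $x \in h(\tau_{\mathrm{out}})$ and $\Upsilon_* m_{\mathbf{A}} = u_\tau$, i.e.\ $\tau \in \mathcal{T}_{\mathbf{A}}^x$.

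Next I would split the product defining $f_{\foD_{\mathbf{s},\Sigma},x}$ into its initial and non-initial parts, matching the definitions \eqref{Eq: fin}--\eqref{Eq: fout}. For the incoming part, an initial wall through $x$ comes from some $\underline{\rho} \in \widetilde{\Sigma}_i^{[1]}$ with $x \in \rho$, and the corresponding wall type is the $\tau_\rho$ described below \eqref{eq_bl}: a single vertex with one leg, $Q_{\tau_\rho,\RR}^\vee = \underline{\rho}$, $u_{\tau_\rho}=v_i$, and curve class a multiple $\ell[E_i]$ of the exceptional $\PP^1$-fiber. The punctured invariants attached to this wall type are the $N_{\tau_\rho,\ell[E_i]}^{(X,D)}$, which by \cite[Thm 5.6]{HDTV} (together with the local computation of the blow-up geometry near $\mathcal{E}_i$) assemble into the stated exponential $\exp(\sum_{\ell \geq 1}\ell k_{\tau_\rho} N_{\tau_\rho,\ell[E_i]}^{(X,D)} z^{\ell v_i} t_i^{\ell[E_i]})$. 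Here one should note $z^{\ell v_i}$ rather than $z^{-\ell v_i}$ requires tracking the orientation convention in \eqref{eq_autom}, which is the one source of a possible sign slip; I would state the conventions explicitly and cite the corresponding normalization in \cite{HDTV}.

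For the non-incoming part, fix $\mathbf{A}=(a_i)_{i\in I}\in\NN^I$ and note that the only wall through $x$ is $\fod_x$, lying in the hyperplane $W_x$; a monomial $z^{\sum_i a_i v_i}\prod_i t_i^{a_i}$ can appear in $f_{\fod_x}$ only if $m_{\mathbf{A}}=-\sum_i a_i v_i \in W_x$, i.e.\ $\mathbf{A}\in\mathcal{A}_x$. The wall types contributing this monomial are exactly those in $\mathcal{T}_{\mathbf{A}}^x$, and the key point is to verify that the curve class of any such $\tau$, when pushed from $X_\Sigma$ to $X$ via $\mathrm{Bl}^*$ and corrected by the exceptional classes $[E_i]$, equals $\beta_{\mathbf{A}}^x$ as in \eqref{eq_curve_class}. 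This is precisely the toric-intersection computation done in \cite{HDTV}: the intersection numbers $\bar\beta_{\mathbf{A}}^x\cdot D'=\sum_{i\in I_{D'}}a_i+\sum_{j\in J_{D'}}b_j$ record, for the toric part, the balancing/tangency constraints of the tropical curve of type $\tau$ at the divisor $D'$, while the subtraction $-\sum_i a_i[E_i]$ accounts for the $a_i$-fold tangency being absorbed by the blow-up of $H_i$. Granting this identification, \cite[Thm 5.6]{HDTV} gives the wall function as $\exp(\sum_{\mathbf{A}\in\mathcal{A}_x}\sum_{\tau\in\mathcal{T}_{\mathbf{A}}^x}k_\tau N_{\tau,\beta_{\mathbf{A}}^x}^{(X,D)} z^{\sum_i a_i v_i}\prod_i t_i^{a_i})$, which is the claimed formula.

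The main obstacle is the curve-class bookkeeping in the non-incoming case: one must check carefully that the pairing of $\bar\beta_{\mathbf{A}}^x$ with every toric divisor (not just those through $\sigma_x$) is forced by the tropical data, that the choice of the cone $\sigma_x$ and basis $m_1^{\sigma_x},\dots,m_\ell^{\sigma_x}$ does not affect $\beta_{\mathbf{A}}^x$, and that the smoothness of $\Sigma$ (used to ensure $\sigma_x$ is regular) is exactly what makes the integers $b_j$ well-defined. A secondary subtlety is that \cite{HDTV} may phrase its result in terms of the canonical scattering diagram on $(B,\mathscr{P})$ rather than on $(M_\RR,\Sigma)$, so the transport along $\Upsilon$ — in particular that $\Upsilon_*$ is a well-defined isomorphism of integral tangent spaces away from $\Sing$ — needs to be invoked cleanly; this, however, is already established in \cite[\S 6]{HDTV} and requires only citation. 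Assumptions \eqref{eq_cluster_assumption} and \eqref{eq_H_int} enter exactly where \cite{HDTV} uses them, namely to guarantee that the only initial walls are the $(\rho,1+t_i z^{v_i})$ and that the blow-up centers $H_i$ are disjoint and smooth, so the wall types $\tau_\rho$ and the local model near each $\mathcal{E}_i$ are as described.
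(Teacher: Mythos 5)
Your proposal follows essentially the same route as the paper: both reduce the statement to the main result of \cite{HDTV} (the equivalence of the canonical scattering diagram $\foD_{(X,D)}$ on $(B,\mathscr{P})$ with a scattering diagram obtained by pushing forward a combinatorial scattering diagram in $M_\RR$ along $\Upsilon$), and then unpack the incoming and non-incoming wall contributions using the definitions \eqref{Eq: fin}--\eqref{Eq: fout}. The one step you gloss over, which the paper treats explicitly, is the identification of the initial scattering diagram. In \cite{HDTV} the initial scattering diagram $\foD_{(X_\Sigma,H),\mathrm{in}}$ is defined for arbitrary smooth disjoint hypersurfaces $H_i$ in the toric boundary by a formula in \cite[Eq.\,(5.6)]{HDTV}; it is not a priori the same object as the $\foD_{\mathbf{s},\Sigma,\mathrm{in}}$ of \eqref{eq_D_in}, whose walls carry the specific functions $1+t_iz^{v_i}$. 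The paper cites \cite[Lemma 4.3]{ABclustermirror} to verify that, when the $H_i$ are cut out by $1+t_iz^{e_i}=0$ as in \eqref{eq_H_i}, the general formula of \cite{HDTV} specializes to precisely \eqref{eq_D_in}; only then does the uniqueness clause of \cite[Thm 5.6]{HDTV} force $\foD_{(X_\Sigma,H)}$ and $\foD_{\mathbf{s},\Sigma}$ to be equivalent. You write that ``assumptions \eqref{eq_cluster_assumption} and \eqref{eq_H_int} guarantee that the only initial walls are $(\rho,1+t_iz^{v_i})$,'' which is the right statement, but it needs to be recognized as a lemma to be proved (or cited), not a definitional unwinding. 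Otherwise your outline of the incoming/non-incoming split, the role of $\Upsilon_*$ on integral tangent vectors, and the curve-class bookkeeping for $\beta_{\mathbf{A}}^x$ all match the logic of the paper's proof, which in fact says rather less about these points than you do and simply delegates them to \cite[\S 6]{HDTV} and \cite[Def 2.33]{HDTV}.
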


\begin{proof}
We explain how Theorem \ref{thm_hdtv} follows from \cite[Thm $6.1$]{HDTV}. By \cite[Def 3.10]{gross2021canonical}, for any log Calabi-Yau pair $(X,D)$, the punctured Gromov--Witten invariants $N_{\tau,\beta}^{(X,D)}$ define a \emph{canonical scattering diagram} $\foD_{(X,D)}$ in the tropicalization $(B,\mathscr{P})$ of $(X,D)$. We refer to \cite[\S 3.2]{gross2021canonical} for the general notion of scattering diagram (called wall structure in \cite{gross2021canonical}) in $(B,\mathscr{P})$.
For log Calabi-Yau pairs $(X,D)$ obtained as blow-up of a toric variety $X_\Sigma$ with fan $\Sigma$ in $M_\RR$ along disjoint smooth hypersurfaces $H=\cup_{i \in I} H_i$ of its toric boundary, an initial scattering diagram $\foD_{(X_\Sigma, H), \mathrm{in}}$
in $M_\RR$ is defined explicitly in \cite[Eq.\,(5.6)]{HDTV}.
A scattering diagram $\foD_{(X_\Sigma,H)}$ in $M_\RR$ is then defined using \cite[Thm 5.6]{HDTV} as the unique scattering diagram in $M_\RR$ containing $\foD_{(X_\Sigma, H), \mathrm{in}}$ and such that the complement $\foD_{(X_\Sigma, H)} \setminus \foD_{(X_\Sigma, H), \mathrm{in}}$ consists only of non-incoming walls. Then, the statement of \cite[Thm $6.1$]{HDTV} is the equivalence of the canonical scattering diagram $\foD_{(X,D)}$ with a scattering diagram $\Upsilon(\foD_{(X_\Sigma,H)})$ constructed from $\Upsilon(\foD_{(X_\Sigma,H)})$ in an explicit way described in \cite[\S 6]{HDTV} using the map $\Upsilon$ as in \eqref{eq_Upsilon}.

By \cite[Lemma 4.3]{ABclustermirror}, when $(X,D)$ is a log Calabi--Yau compactifications of a cluster variety, that is when the hypersurfaces $H_i$ are of the form given in 
\eqref{eq_H_i}, the initial scattering diagram $\foD_{(X_\Sigma, H), \mathrm{in}}$ in \cite[Eq.\,(5.6)]{HDTV} coincides with the initial scattering diagram $\foD_{\mathbf{s},\Sigma,\mathrm{in}}$
in \eqref{eq_D_in}. Thus, the scattering diagrams $\foD_{(X_\Sigma, H)}$ and $\foD_{\mathbf{s},\Sigma}$ are equivalent in this case by the uniqueness result in \cite[Thm 5.6]{HDTV}. Hence, it follows from \cite[Thm 6.1]{HDTV} that the canonical scattering diagram $\foD_{(X,D)}$ is equivalent to $\Upsilon(\foD_{\mathbf{s},\Sigma})$. 
Theorem \ref{thm_hdtv} then follows from the definition of the canonical scattering diagram $\foD_{(X,D)}$ in terms of punctured Gromov--Witten invariants $N_{\tau,\beta}^{(X,D)}$ (see \cite[Def 3.10]{gross2021canonical} or \cite[Def 2.33]{HDTV}) and from the explicit description of the map $\foD_{\mathbf{s},\Sigma} \mapsto\Upsilon(\foD_{\mathbf{s},\Sigma})$ given in \cite[\S 6]{HDTV}.
\end{proof}

\section{DT/punctured GW correspondence}

\subsection{Quiver and cluster compatibility} 
\label{sec_compatibility}

\begin{definition} \label{def_compatibility}
Let $Q$ be a quiver and let $\mathbf{s}=(N,(e_i)_{i\in I}, \omega)$ be a symplectic seed with index set $I=\{ i\in Q_0\,|\, \iota_{s_i} \omega_Q \neq 0\}$ as in \S \ref{sec:cluster}. 
A \emph{compatibility data} between $Q$ and $\mathbf{s}$ is a linear map
\begin{equation} \label{eq_psi}
\psi \colon N_Q \longrightarrow N
\end{equation}
with finite cokernel, that is, with $\psi \otimes \Q$ is surjective, 
such that $\psi(s_i)=e_i$ for all $i \in I$, and $\psi^{*}\omega=\omega_Q$, that is, $\omega(e_i,e_j)=\omega_Q(s_i,s_j)$ for all $i,j \in I$. 
\end{definition}

\begin{remark}
By Definition \ref{def_seed} of a symplectic seed, $\omega \otimes \Q$ is non-degenerate, and so the rank of $\omega$ is equal to the rank of $N$. As $\psi \otimes \Q$ is surjective and $\psi^{\star} \omega =\omega_Q$, it follows that the rank of $N$ is equal to the rank of $\omega_Q$. In particular, the dimension of the cluster variety defined by the symplectic seed $\mathbf{s}$ is equal to the rank of the skew-symmetrized Euler form $\omega_Q$.
\end{remark}

In what follows we provide examples of compatibility data.

\begin{example}
    Let $Q$ be a quiver. 
    Let $\mathbf{s}=(N,(e_i)_{i\in I}, \omega)$
    be the symplectic seed defined by $N=N_Q/
    \ker \omega_Q$, $e_i=\pi(s_i)$ for all $i \in I$, where $\psi: N_Q \rightarrow N$ is the quotient map, and $\omega$ the non-degenerate skew-symmetric form on $N$ induced by $\omega_Q$. Then, $\psi$ is a compatibility data between $Q$ and $\mathbf{s}$. 
\end{example}

\subsection{Comparison of scattering diagrams}

Let $Q$ be a quiver, $\mathbf{s}=(N,(e_i)_{i\in I},\omega)$ a symplectic seed, and $\psi: N_Q \rightarrow N$ a compatibility data between $\mathbf{s}$ and $Q$ as in \eqref{eq_psi}. We denote by
\begin{equation}\nonumber
\psi^{\vee}: M_\RR \longrightarrow M_{Q,\RR}
\end{equation}
the dual map of $\psi$.

Let $\foD_Q$ be a consistent $Q$-scattering diagram in $M_{Q,\RR}$ without central walls, as in Definitions \ref{def_dt_scattering}-\ref{def_central}. 
We define a scattering diagram $(\psi^\vee)^\star \foD_Q = \{(\psi^\vee)^\star (\fod, f_\fod)\}$ in $M_\RR$ in the sense of Definition \ref{def_scattering} as follows:
for every wall $(\fod, f_\fod)$ of $\foD_Q$, we define 
$(\psi^\vee)^\star (\fod, f_\fod):= ((\psi^\vee)^{-1}(\fod), (\psi^\vee)^\star f_\fod) $.
Here, 
$(\psi^\vee)^{-1}(\fod) \subset M_\RR$ is the preimage by $\psi^\vee$ of $\fod \subset M_{Q,\RR}$.
Note that as $(\fod, f_\fod)$ is not central, the primitive direction $\gamma_\fod \in N^+_Q$ of $\fod$ is not in $\ker \omega_Q$, and so, as $\psi^{\star} \omega =\omega_Q$ by Definition \ref{def_compatibility}, we have $\psi(\gamma_\fod) \notin \ker \omega$. In particular, $\psi(\gamma_\fod) \neq 0$, and so, as $(\psi^\vee)^{-1}(\fod)$ is contained in $\psi(\gamma_\fod)^\perp$, one deduces that $(\psi^\vee)^{-1}(\fod)$ is of codimension one in $M_\RR$ if non empty.

Now to define $(\psi^\vee)^\star f_\fod$, first observe that one can write:
\begin{equation} \label{eq_f}
f_\fod = 1+\sum_{k \geq 1} c_k z^{k \gamma_\fod}\end{equation}
with $c_k \in \Q$. Decompose $\gamma_\fod \in N_Q^+$ in the basis $(s_i)_{i \in Q_0}$:
\[ \gamma_{\fod}=\sum_{i \in Q_0} \gamma_{\fod,i} s_i\]
with $\gamma_{\fod,i} \in \Z_{\geq 0}$.
Note that as $\foD_Q$ is without central walls, it follows from Remark \ref{rem_ker} that if $c_k \neq 0$, then $\gamma_{\fod,i} =0$ if $i \notin I$, and so $\gamma_{\fod}=\sum_{i \in I} \gamma_{\fod,i} s_i$, and $\psi(\gamma_{\fod})=\sum_{i\in I} \gamma_{\fod,i} e_i$.
Finally, we set 
\begin{equation}\label{eq_psi_f}
     (\psi^\vee)^\star f_\fod := 1+\sum_{k \geq 1} 
c_k  z^{k \iota_{\psi(\gamma_\fod)} \omega}
\left( \prod_{i \in I} t_i^{k \gamma_{\fod,i}} \right) \,.
\end{equation}
Note that the finiteness condition in Definition \ref{def_scattering} for $(\psi^\vee)^\star \foD_Q$ follows from the finiteness condition in Definition \ref{def_dt_scattering} for $\foD_Q$. Hence, $(\psi^\vee)^\star \foD_Q$ is indeed a scattering diagram.

\begin{lemma} \label{lem_psi_consistency}
Let $\foD_Q$ be a consistent $Q$-scattering diagram in $M_\RR$ without central walls. Then 
$(\psi^\vee)^\star \foD_Q$ is also consistent.
\end{lemma}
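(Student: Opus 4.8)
The plan is to deduce consistency of $(\psi^\vee)^\star\foD_Q$ by pushing loops forward along $\psi^\vee$ and invoking consistency of $\foD_Q$. First I would record the needed linear algebra. Since $\psi$ has finite cokernel, $\psi^\star\omega=\omega_Q$ and $\ker\omega=0$, one gets $\ker\psi=\ker\omega_Q$; hence $\psi^\vee\colon M_\RR\hookrightarrow M_{Q,\RR}$ is injective and identifies $M_\RR$ with the rational subspace $(\ker\omega_Q)^\perp$, while the vectors $v_i=\iota_{e_i}\omega$ span $M_\RR$ over $\QQ$. For every wall $(\fod,f_\fod)$ of the central‑wall‑free diagram $\foD_Q$ one has $\gamma_\fod\notin\ker\omega_Q$, so $\gamma_\fod^\perp\cap(\ker\omega_Q)^\perp$ is a hyperplane of $M_\RR$; this gives the codimension‑one statement for the walls $(\psi^\vee)^{-1}(\fod)$ and also shows that no cone of $\Sing(\foD_Q)$ contains $(\ker\omega_Q)^\perp$. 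It follows that $\Sing\bigl((\psi^\vee)^\star\foD_Q\bigr)\subset(\psi^\vee)^{-1}\bigl(\Sing(\foD_Q)\bigr)$ and that the latter is a countable union of nowhere‑dense subsets of $M_\RR$; since the path‑ordered automorphism depends only on the homotopy class of the loop in the complement of the singular locus, we may therefore replace a given loop $\alpha$ in $M_\RR\setminus\Sing\bigl((\psi^\vee)^\star\foD_Q\bigr)$ by a homotopic one for which $\psi^\vee\circ\alpha$ avoids $\Sing(\foD_Q)$ and meets $\Supp(\foD_Q)$ transversally.

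The algebraic core is the comparison of the two families of wall‑crossing automorphisms. One uses the monomial correspondence $z^\gamma\leftrightarrow z^{\iota_{\psi(\gamma)}\omega}\prod_{i\in I}t_i^{\gamma_i}$: by Remark \ref{rem_ker} only $\gamma$ supported on $I$ occur in the wall functions $f_\fod$, so this correspondence is well defined, it carries $f_\fod$ to $(\psi^\vee)^\star f_\fod$ as in \eqref{eq_psi_f}, and the identity $\omega_Q=\psi^\star\omega$ turns the exponent $\omega_Q(\gamma_\fod,\gamma)$ of \eqref{eq_autom_Q} into a multiple of the exponent $\langle n_\fod,\iota_{\psi(\gamma)}\omega\rangle$ of \eqref{eq_autom}, where $n_\fod$ is the primitive normal of $(\psi^\vee)^{-1}(\fod)$. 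With $\psi^\vee\circ\alpha$ as above, consistency of $\foD_Q$ gives $\mathfrak{p}_{\foD_Q,\psi^\vee\circ\alpha}=\mathrm{id}$, and one transports this to the conclusion that $\mathfrak{p}_{(\psi^\vee)^\star\foD_Q,\alpha}$ fixes $z^{v_i}$ for every $i\in I$. Finally, $\mathfrak{p}_{(\psi^\vee)^\star\foD_Q,\alpha}$ acts on each $z^m$ by multiplication by a character $m\mapsto g_m$ with $g_m\equiv 1$ modulo the ideal $\mathfrak{m}$ generated by the $t_i$; since the $v_i$ span $M_\RR$ over $\QQ$ and the group $1+\mathfrak{m}$ in $\QQ[M][\![(t_i)_{i\in I}]\!]$ is torsion‑free (characteristic zero), this forces $g_m=1$ for all $m\in M$, i.e.\ $\mathfrak{p}_{(\psi^\vee)^\star\foD_Q,\alpha}=\mathrm{id}$, which is the desired consistency.

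The delicate point is precisely the transport across corresponding walls. Because $\psi$ need not send primitive elements to primitive elements, the primitive normal $n_\fod$ of $(\psi^\vee)^{-1}(\fod)$ is in general a proper divisor of $\psi(\gamma_\fod)$, so under the monomial correspondence the $(\psi^\vee)^\star\foD_Q$‑crossing of $(\psi^\vee)^{-1}(\fod)$ matches only a $|\psi(\gamma_\fod)|$‑th root of the $\foD_Q$‑crossing of $\fod$, and these divisibilities vary from wall to wall. Thus the comparison is not a literal intertwining of path‑ordered products, and the real content is to show that the accumulated root ambiguities cancel around a closed loop; the torsion‑freeness of $1+\mathfrak{m}$ is the mechanism that makes this cancellation possible, and managing this bookkeeping—rather than the topology of pushing loops forward—is the crux of the argument. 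The general‑position input for $\psi^\vee\circ\alpha$, by contrast, is routine and relies only on $\gamma_\fod\notin\ker\omega_Q$ for every wall, as in the first paragraph.
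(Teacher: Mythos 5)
Your overall strategy mirrors the paper's: compose the loop $\alpha$ with $\psi^\vee$, invoke consistency of $\foD_Q$, and transport to $(\psi^\vee)^\star\foD_Q$. Your final generation step — the $v_i$ span $M\otimes\QQ$, plus torsion-freeness of $1+\mathfrak m$ — plays the same role as the paper's observation that the cone $M'=\{\iota_{\psi(\gamma)}\omega:\gamma\in N_Q^+\}$ has maximal rank in $M$. Your additional homotopy argument for making $\psi^\vee\circ\alpha$ avoid $\Sing(\foD_Q)$ is more explicit than the paper (which simply asserts that $\psi^\vee\circ\alpha$ works), and the observation that no cone of $\Sing(\foD_Q)$ contains $(\ker\omega_Q)^\perp$ is correct and relevant.

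However, your argument does not constitute a complete proof: the crucial ``transport'' step, which you yourself flag as the crux, is left unproved. You correctly observe that the wall-crossing exponents $\omega_Q(\gamma_\fod,\gamma)$ and $\langle n_\fod,\iota_{\psi(\gamma)}\omega\rangle$ agree only up to the divisibility $|\psi(\gamma_\fod)|$ (for instance $\gamma_\fod=2s_1+s_2$ for the local $\PP^2$ quiver has $\psi(\gamma_\fod)=(0,3)$ of divisibility $3$). But you then only assert that ``the torsion-freeness of $1+\mathfrak m$ is the mechanism'' making the ``accumulated root ambiguities cancel around a closed loop,'' without carrying out this bookkeeping. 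Torsion-freeness lets one take unique $c$-th roots inside the abelian group $1+\mathfrak m$, but the path-ordered product is a composition in a \emph{non-abelian} pro-nilpotent group, and a product of varying $c_j$-th roots of automorphisms is not a root of the product, so the intended deduction that $\mathfrak{p}_{(\psi^\vee)^\star\foD_Q,\alpha}$ fixes each $z^{v_i}$ does not follow from what you wrote. The paper, by contrast, does not go through roots at all: it asserts the exact intertwining $\mathfrak{p}^{\leq k}_{(\psi^\vee)^\star\foD_Q,\alpha}\circ\varphi=\varphi\circ\mathfrak{p}^{\leq k}_{\foD_Q,\psi^\vee\circ\alpha}$ for the algebra map $\varphi:\Q[N_Q^+]/\mathfrak m^k\to\Q[M'][(t_i)]/\mathfrak m^k$, $z^\gamma\mapsto z^{\iota_{\psi(\gamma)}\omega}\prod t_i^{\gamma_i}$, and concludes directly. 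If you believe (as your second paragraph suggests) that this intertwining is not exact because of the $|\psi(\gamma_\fod)|$ factor, then you have identified a point that needs to be addressed, but you have not addressed it; if instead the exact intertwining does hold under the paper's conventions, then the ``root ambiguity'' picture in your last paragraph is a red herring and the argument simplifies to the paper's. Either way, as written, the proposal has a gap precisely at the step you identify as the crux.
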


\begin{proof}
Let $\alpha: [0,1] \rightarrow M_\RR \setminus \Sing ((\psi^\vee)^\star \foD_Q)$ be a continuous path intersecting $\Supp((\psi^\vee)^\star \foD_Q)$ transversally. Then $\psi^\vee \circ \alpha$ is a path in $M_{Q,\RR} \setminus \Sing (\foD_Q)$  intersecting $\Supp(\foD_Q)$ transversally.
In order to prove Lemma \ref{lem_psi_consistency}, it is enough to show that if the automorphism $\mathfrak{p}_{\foD_Q, \psi^\vee \circ \alpha}$ of $\QQ[\![N_Q^+]\!]$ defined in \eqref{eq_autom_Q} is the identity, then the automorphism 
$\mathfrak{p}_{(\psi^\vee)^\star\foD_Q, \alpha}$
of $\Q[M][\![(t_i)_{i\in I}]\!]$
defined in \eqref{eq_autom} is also the identity.
It is sufficient to prove this result for every finite truncation with respect to powers of the maximal ideals $\mathfrak{m}$ of $\QQ[\![N_Q^+]\!]$ and $\Q[\![(t_i)_{i\in I}]\!]$. In other words, for every $k \geq 1$, we consider the induced automorphisms $\mathfrak{p}^{\leq k}_{\foD_Q, \psi^\vee \circ \alpha}$ and $\mathfrak{p}^{\leq k}_{(\psi^\vee)^\star\foD_Q, \alpha}$
acting on $\QQ[N_Q^+]/\mathfrak{m}^k$ and $\Q[M][(t_i)_{i\in I}]/\mathfrak{m}^k$ respectively.

Denote by $M'$ the image in $M$ of the map
\begin{align} N_Q^+ &\longrightarrow M \\
\gamma &\longmapsto \iota_{\psi(\gamma)} \omega \,. \nonumber
\end{align}
As $N_Q^+$ is a cone of maximal rank in $N_Q$, $\psi \otimes \QQ$ is surjective by Definition \ref{def_compatibility} and $\omega \otimes \QQ$ is non-degenerate by Definition \ref{def_seed}, it follows that $M'$ is also a cone of maximal rank in $M$. As $\mathfrak{p}^{\leq k}_{(\psi^\vee)^\star\foD_Q, \alpha,x}$ is an algebra endomorphism and $\mathfrak{p}^{\leq k}_{(\psi^\vee)^\star\foD_Q, \alpha,x}= \mathrm{Id} \mod \mathfrak{m}$, it is enough to prove that $\mathfrak{p}^{\leq k}_{(\psi^\vee)^\star\foD_Q, \alpha,x}$ is the identity on $\Q[M'][(t_i)_{i\in I}]$ in order to conclude that it is the identity on $\Q[M][(t_i)_{i\in I}]$.

It follows from the definition of $M'$ that the additive map 
\begin{align}
 N_Q^{+} &\longrightarrow M' \oplus \Z^I \\
\gamma=\sum_{i\in Q_0} \gamma_i s_i 
&\longmapsto (\iota_{\psi(\gamma)}\omega, (\gamma_i)_{i\in I})  \nonumber
\end{align}
is surjective, and so induces a surjective algebra morphism 
\[ \varphi : \Q[N_Q^+]/\mathfrak{m}^k \longrightarrow \Q[M'][(t_i)_{i\in I}]/\mathfrak{m}^k \,.\]
Comparing the explicit formulas \eqref{eq_f}-\eqref{eq_autom_Q} for $\foD_Q$ with \eqref{eq_psi_f}-\eqref{eq_autom} for $(\psi^\vee)^\star \foD_Q$, one checks that 
\[ \mathfrak{p}^{\leq k}_{(\psi^\vee)^\star\foD_Q, \alpha} \circ \varphi = \varphi \circ \mathfrak{p}^{\leq k}_{\foD_Q, \psi^\vee \circ \alpha}\,.\]
In particular, as $\varphi$ is surjective, if we have
$\mathfrak{p}^{\leq k}_{\foD_Q, \psi^\vee \circ \alpha}=\mathrm{Id}$, then we also have $\mathfrak{p}^{\leq k}_{(\psi^\vee)^\star\foD_Q, \alpha}=\mathrm{Id}$, and this concludes the proof.
\end{proof}

Using the above construction $\foD_Q \mapsto (\psi^\vee)^\star \foD_Q$, we compare in the following result the stability scattering diagram $\overline{\foD}_{(Q,W)}^{\mathrm{st}}$ defined in 
\ref{def_dt_scattering}
and the HDTV scattering diagram $\foD_{\mathbf{s,\Sigma}}$ defined in \S \ref{sec: hdtv}.

\begin{theorem} \label{thm_comparison_scattering}
Let $(Q,W)$ be a quiver with potential having trivial attractor DT invariants, $\mathbf{s}=(N,(e_i)_{i\in I},\omega)$ a symplectic seed,  and $\Sigma$ a fan as in \S\ref{sec:cluster}. Fix a compatibility data $\psi: N_Q \rightarrow N$  between $\mathbf{s}$ and $Q$ as in \eqref{eq_psi}.
Then, the 
$Q$-scattering without central walls $\overline{\foD}_{(Q,W)}^{\mathrm{st}}$, obtained from the stability scattering diagram $\overline{\foD}_{(Q,W)}^{\mathrm{st}}$, and the HDTV scattering diagram $\foD_{\mathbf{s,\Sigma}}$
are related as follows:
\[ (\psi^\vee)^\star \overline{\foD}_{(Q,W)}^{\mathrm{st}}= \foD_{\mathbf{s},\Sigma} \,.\]
\end{theorem}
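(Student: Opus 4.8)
The strategy is to identify both scattering diagrams as the unique consistent scattering diagram in $M_\RR$ extending a common initial data by non-incoming walls, and then invoke the uniqueness statement of \cite[Thm 5.6]{HDTV}. First I would establish that $(\psi^\vee)^\star \overline{\foD}_{(Q,W)}^{\mathrm{st}}$ is a genuine scattering diagram in $M_\RR$ in the sense of Definition \ref{def_scattering}; this is precisely the content of the paragraph preceding Lemma \ref{lem_psi_consistency}, and its consistency is Lemma \ref{lem_psi_consistency}, using that $\overline{\foD}_{(Q,W)}^{\mathrm{st}}$ is consistent (by \cite[Thm 1.1]{Bridgeland} and Lemma \ref{lem_no_central}). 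So the left-hand side is a consistent scattering diagram in $M_\RR$, and by the uniqueness in \cite[Thm 5.6]{HDTV} it suffices to check two things: (a) that its set of incoming walls, suitably interpreted, produces the same wall-functions as the initial HDTV scattering diagram $\foD_{\mathbf{s},\Sigma,\mathrm{in}}$ of \eqref{eq_D_in}; and (b) that every non-incoming wall of $(\psi^\vee)^\star \overline{\foD}_{(Q,W)}^{\mathrm{st}}$ is non-incoming, i.e. the pullback operation preserves the non-incoming property.

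For step (a), I would start from Theorem \ref{thm_dt_cluster}, which identifies $\overline{\foD}_{(Q,W)}^{\mathrm{st}}$ with the cluster scattering diagram $\foD^{\mathrm{cl}}_Q$, whose incoming walls are exactly $\{(s_i^\perp, 1+z^{s_i})\}_{i\in I}$ (Definition of $\foD^{\mathrm{cl}}_{Q,\mathrm{in}}$). Applying the pullback formula \eqref{eq_psi_f} to the wall $(s_i^\perp, 1+z^{s_i})$, whose primitive direction is $\gamma_\fod = s_i$ with $\gamma_{\fod,i}=1$ and $\gamma_{\fod,j}=0$ for $j\neq i$, yields the wall-function $1 + t_i z^{\iota_{\psi(s_i)}\omega} = 1 + t_i z^{\iota_{e_i}\omega} = 1 + t_i z^{v_i}$, using $\psi(s_i)=e_i$ from Definition \ref{def_compatibility} and $v_i = \iota_{e_i}\omega$. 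On the geometric side, $(\psi^\vee)^{-1}(s_i^\perp)$ is the hyperplane $\psi(n)\in\ker$ ... more carefully, it is $v_i^\perp$ in $M_\RR$, which, intersected with the fan $\Sigma$, decomposes as a union of cones $\rho$ with $\rho\subset e_i^\perp$; these are exactly the codimension-one cones $\underline{\rho}\in\widetilde{\Sigma}_i^{[1]}$ indexing the initial walls in \eqref{eq_D_in}. Thus the pullback of the incoming walls of $\overline{\foD}_{(Q,W)}^{\mathrm{st}}$ has the same support and the same wall-functions as $\foD_{\mathbf{s},\Sigma,\mathrm{in}}$, so the two initial scattering diagrams are equivalent.

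For step (b), I would argue that the pullback of a non-incoming $Q$-wall is non-incoming. A $Q$-wall $(\fod,f_\fod)$ with primitive direction $\gamma_\fod$ is incoming iff $\iota_{\gamma_\fod}\omega_Q\in\fod$ (Definition \ref{def_incoming_Q_wall}); a wall in $M_\RR$ of direction $-m_0$ is incoming iff $m_0\in\fod$ (Definition \ref{def_incoming_wall}). The pullback wall $((\psi^\vee)^{-1}(\fod), (\psi^\vee)^\star f_\fod)$ has direction $-m_0$ with $m_0 = \iota_{\psi(\gamma_\fod)}\omega$. One checks $\psi^\vee(\iota_{\psi(\gamma_\fod)}\omega) = \iota_{\gamma_\fod}(\psi^\star\omega) = \iota_{\gamma_\fod}\omega_Q$, using $\psi^\star\omega=\omega_Q$. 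Hence if $m_0\in(\psi^\vee)^{-1}(\fod)$, i.e. $\psi^\vee(m_0)\in\fod$, then $\iota_{\gamma_\fod}\omega_Q\in\fod$, so the original wall was incoming. Contrapositively, pullbacks of non-incoming walls are non-incoming. Combining: $(\psi^\vee)^\star\overline{\foD}_{(Q,W)}^{\mathrm{st}}$ is a consistent scattering diagram in $M_\RR$ containing (an equivalent of) $\foD_{\mathbf{s},\Sigma,\mathrm{in}}$ whose remaining walls are all non-incoming, so by the uniqueness in \cite[Thm 5.6]{HDTV} it equals $\foD_{\mathbf{s},\Sigma}$ up to equivalence.

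**Main obstacle.** The subtle point is matching the \emph{supports}: I must verify that the pullback $(\psi^\vee)^{-1}$ of the $(d-1)$-dimensional cones appearing in $\overline{\foD}_{(Q,W)}^{\mathrm{st}}$ really lands inside the cones of the fan $\Sigma$ used to define $\foD_{\mathbf{s},\Sigma}$, and in particular that the incoming walls pull back to a diagram \emph{equivalent} to (not literally equal to) $\foD_{\mathbf{s},\Sigma,\mathrm{in}}$ — the cone decompositions need not coincide, only the wall-functions $f_{\foD,x}$ at each point. I would handle this by working pointwise: for $x\in M_\RR$ generic, compare $f_{(\psi^\vee)^\star\overline{\foD}_{(Q,W)}^{\mathrm{st}},\,x}$ with $f_{\foD_{\mathbf{s},\Sigma,\mathrm{in}},\,x}$ directly using the product formulas, reducing everything to the already-established identities $\psi(s_i)=e_i$, $\iota_{e_i}\omega=v_i$, and $\psi^\vee\circ\iota_{\psi(-)}\omega = \iota_{(-)}\omega_Q$. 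A secondary point worth spelling out is that the uniqueness in \cite[Thm 5.6]{HDTV} is stated for scattering diagrams with a \emph{given} incoming part; one should note it applies to any scattering diagram equivalent to one with the standard incoming walls, which is the form in which $(\psi^\vee)^\star\overline{\foD}_{(Q,W)}^{\mathrm{st}}$ naturally presents itself.
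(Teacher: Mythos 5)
Your overall strategy is the same as the paper's: replace $\overline{\foD}_{(Q,W)}^{\mathrm{st}}$ with the cluster scattering diagram $\foD_Q^{\mathrm{cl}}$ via Theorem~\ref{thm_dt_cluster}, push forward through $(\psi^\vee)^\star$, use Lemma~\ref{lem_psi_consistency} for consistency of the pullback, and conclude by the uniqueness statement of \cite[Thm 5.6]{HDTV}. Your explicit check in step~(b) that non-incoming $Q$-walls pull back to non-incoming walls (via $\psi^\vee(\iota_{\psi(\gamma_\fod)}\omega)=\iota_{\gamma_\fod}\omega_Q$) is a correct and useful addition that the paper leaves implicit.

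However, there is a genuine gap in step~(a). You assert that the cones into which $e_i^\perp$ decomposes under $\Sigma$ ``are exactly the codimension-one cones $\underline{\rho}\in\widetilde{\Sigma}_i^{[1]}$'' and conclude that the pullback of $\foD_{Q,\mathrm{in}}^{\mathrm{cl}}$ and the initial HDTV diagram $\foD_{\mathbf{s},\Sigma,\mathrm{in}}$ ``have the same support,'' hence are equivalent. This is false: $\widetilde{\Sigma}_i^{[1]}$ only indexes the codimension-one cones of $\Sigma$ \emph{containing $v_i$} (and lying in $e_i^\perp$), so $\bigcup_{\underline{\rho}\in\widetilde{\Sigma}_i^{[1]}}\rho$ is generally a proper subset of $e_i^\perp$; e.g.\ in the rank-two case of~\S\ref{sec_ex_cubic} this union is the half-line $\RR_{\geq 0}v_i$ while $(\psi^\vee)^{-1}(s_i^\perp)=e_i^\perp$ is the full line. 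Thus the pullback of the initial cluster wall has a strictly larger support, and the two diagrams are \emph{not} equivalent: at a generic point $x\in e_i^\perp$ far from $v_i$ one has $f_{(\psi^\vee)^\star\foD_{Q,\mathrm{in}}^{\mathrm{cl}},x}=1+t_iz^{v_i}$ but $f_{\foD_{\mathbf{s},\Sigma,\mathrm{in}},x}=1$. Your ``main obstacle'' paragraph senses the supports issue but misdiagnoses it as a mere cone-subdivision matter and still asserts equivalence.

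The fix is what the paper actually does: the extra pieces of $e_i^\perp$ away from $\RR_{\geq 0}v_i$ are cones \emph{not} containing $v_i$, and since the wall direction there is $-v_i$, they are non-incoming walls by Definition~\ref{def_incoming_wall}. So $(\psi^\vee)^\star\foD_{Q,\mathrm{in}}^{\mathrm{cl}}$ is equivalent to a scattering diagram \emph{containing} $\foD_{\mathbf{s},\Sigma,\mathrm{in}}$, whose walls not in $\foD_{\mathbf{s},\Sigma,\mathrm{in}}$ are non-incoming. Combined with your step~(b) and Lemma~\ref{lem_psi_consistency}, the full pullback $(\psi^\vee)^\star\foD_Q^{\mathrm{cl}}$ is then a consistent scattering diagram containing $\foD_{\mathbf{s},\Sigma,\mathrm{in}}$ whose remaining walls are all non-incoming, and the uniqueness in \cite[Thm 5.6]{HDTV} applies. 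You should also note, as the paper does, that $(\psi^\vee)^{-1}(\fod)$ may be empty for some walls $\fod$, in which case the pullback wall is discarded; this is harmless but worth flagging.
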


\begin{proof}
By Theorem \ref{thm_dt_cluster}, the $Q$-scattering diagram 
$\overline{\foD}_{(Q,W)}^{\mathrm{st}}$ and the cluster scattering diagram $\foD_Q^{\mathrm{cl}}$ are equivalent.
Hence, it is enough to show that $(\psi^\vee)^\star  \foD_Q^{\mathrm{cl}}$ and $\foD_{\mathbf{s},\Sigma}$ are equivalent.
Comparing the explicit descriptions \eqref{eq_initial_cluster} and \eqref{eq_D_in} of the initial scattering diagrams 
$\foD_{Q, \mathrm{in}}^{\mathrm{cl}}$
and $\foD_{s,\Sigma,\mathrm{in}}$ using that $v_i = \iota_{e_i} \omega = \iota_{\psi(e_i)} \omega$ for all $i \in I$, one obtains that
$(\psi^\vee)^\star  \foD_Q^{\mathrm{cl}, \mathrm{in}}$ is equivalent to a scattering diagram containing 
$\foD_{s,\Sigma,\mathrm{in}}$, and whose walls not in $\foD_{s,\Sigma,\mathrm{in}}$ are non-incoming. On the other hand, $\foD_Q^{\mathrm{cl}}$ is consistent and so 
$(\psi^\vee)^\star  \foD_Q^{\mathrm{cl}, \mathrm{in}}$ is also consistent by Lemma \ref{lem_psi_consistency}. It follows that $(\psi^\vee)^\star  \foD_Q^{\mathrm{cl}, \mathrm{in}}$ is equivalent to a consistent scattering diagram containing $\foD_{s,\Sigma,\mathrm{in}}$ and whose walls not in $\foD_{s,\Sigma,\mathrm{in}}$ are non-incoming. Therefore,$(\psi^\vee)^\star  \foD_Q^{\mathrm{cl}, \mathrm{in}}$ is equivalent to $\foD_{s,\Sigma}$ by the uniqueness result in \cite[Thm 5.6]{HDTV}.
\end{proof}

\subsection{DT/punctured GW correspondence}
\label{sec_dt_gw}
In this section, we prove our main result, Theorem \ref{thm_main} below, relating quiver DT invariants and punctured Gromov--Witten invariants.

Let $(Q,W)$ be a quiver with potential having trivial attractor DT invariants. 
Let $\mathbf{s}=(N,(e_i)_{i\in I},\omega)$ a symplectic seed, $\Sigma$ a fan as in \S\ref{sec:cluster}, and $(X,D)$ a log Calabi-Yau compactification of the corresponding cluster variety. 
Let \[\psi: N_Q \longrightarrow N\] be a compatibility data between $\mathbf{s}$ and $Q$ as in \eqref{eq_psi}.
Let $\gamma=(\gamma_i)_{i\in I} \in
N_Q\setminus \ker \omega_Q$ be a dimension vector 
and $\theta \in \psi^\vee (M_\RR) \cap \gamma^{\perp}\subset M_{Q,\RR}$ be a general stability parameter contained in $\psi^\vee (M_\RR)$. Let $x \in M_\RR$ be a general point such that $\psi^\vee(x)=\theta$. Then, viewing $\gamma=(\gamma_i)_{i \in I}$ as an element of $\NN^I$, one defines a curve class $\beta_\gamma^x \in N_1(X)$ and a set of wall types $\mathcal{T}_\gamma^x$ as in \S\ref{sec: scattering_calculation}
by taking $\mathbf{A}=\gamma$.

\begin{theorem} \label{thm_main}
Let $(Q,W)$ be a quiver with potential having trivial attractor DT invariants, $\mathbf{s}=(N,(e_i)_{i\in I}, \omega)$ a  symplectic seed with index set \[I=\{ i\in Q_0\,|\, \iota_{s_i} \omega_Q \neq 0\}\,,\] and $(X,D)$ a log Calabi-Yau compactification of the corresponding cluster variety satisfying assumptions \eqref{eq_cluster_assumption} and \eqref{eq_H_int}. 
Fix a compatibility data $\psi \colon N_Q \rightarrow N$  between $\mathbf{s}$ and $Q$ as in \eqref{eq_psi},
and let $\gamma \in N_Q \setminus \ker \omega_Q$ be a dimension vector such that $\gamma \notin \Z_{\geq 1}s_i$ for all $i \in I$. Then, for every general stability parameter $\theta \in \psi^\vee (M_\RR) \cap \gamma^{\perp}\subset M_{Q,\RR}$ and point $x \in M_\RR$ such that $\psi^\vee(x)=\theta$, we have 
\begin{equation} \label{eq_main} 
\overline{\Omega}_\gamma^{+,\theta} 
= \frac{1}{|\gamma|}\sum_{\tau \in \mathcal{T}_\gamma^x} k_\tau N_{\tau,\beta_\gamma^x}^{(X,D)} \,,\end{equation}
    where $\overline{\Omega}_\gamma^{+,\theta} $ is the rational DT invariant of $(Q,W)$ as in \eqref{eq_dt_rational}, and   $N_{\tau,\beta_\gamma^x}^{(X,D)}$ are the punctured Gromov--Witten invariants of $(X,D)$ as in \eqref{eq_gw}.
\end{theorem}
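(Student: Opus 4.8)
The plan is to deduce \eqref{eq_main} by extracting the coefficient of a single monomial from the two sides of the scattering-diagram identity of Theorem~\ref{thm_comparison_scattering}, reading off the geometric side via Theorem~\ref{thm_hdtv}. Write $\gamma = |\gamma|\,\gamma_0$ with $\gamma_0 \in N_Q^+$ primitive. Since $\gamma$ is supported on $I$ (as in the statement) and $\gamma \notin \Z_{\geq 1}s_i$ for all $i \in I$, the primitive vector $\gamma_0$ is supported on $I$ and cannot equal any $s_i$, so its support has at least two elements; and since $\gamma \notin \ker\omega_Q$ neither is $\gamma_0$. As $\psi \otimes \Q$ is surjective, $\psi^\vee$ is injective, so $x$ is the unique preimage of $\theta$; from $\theta \in \gamma^\perp = \gamma_0^\perp$ we get $x \in \psi(\gamma_0)^\perp \subset M_\RR$, and the genericity of $\theta$ amounts to $x$ being a general point of this hyperplane.

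\emph{DT side.} Since $\foD_Q \mapsto (\psi^\vee)^\star\foD_Q$ is induced by the monoid homomorphism $\gamma \mapsto (\iota_{\psi(\gamma)}\omega,\, (\gamma_i)_{i\in I})$ on the submonoid of $N_Q^\oplus$ supported on $I$ (the observation underlying the proof of Lemma~\ref{lem_psi_consistency}), it commutes with forming products of wall functions, so $f_{(\psi^\vee)^\star\overline{\foD}^{\mathrm{st}}_{(Q,W)},x} = (\psi^\vee)^\star f_{\overline{\foD}^{\mathrm{st}}_{(Q,W)},\theta}$. By \eqref{eq_stab_scattering} (applied to $\overline{\foD}^{\mathrm{st}}_{(Q,W)}$, which agrees with $\foD^{\mathrm{st}}_{(Q,W)}$ away from central walls), the part of $f_{\overline{\foD}^{\mathrm{st}}_{(Q,W)},\theta}$ involving only powers $z^{k\gamma_0}$, $k\geq 1$, is $\exp\bigl(\sum_{k\geq 1} k\,\overline{\Omega}^{+,\theta}_{k\gamma_0}z^{k\gamma_0}\bigr)$. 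Applying $(\psi^\vee)^\star$ via \eqref{eq_psi_f} and using $\iota_{\psi(\gamma_0)}\omega = \sum_{i\in I}\gamma_{0,i}v_i$, together with Theorem~\ref{thm_comparison_scattering}, we find that the coefficient of $z^{\sum_{i\in I}\gamma_i v_i}\prod_{i\in I}t_i^{\gamma_i}$ in $\log f_{\foD_{\mathbf{s},\Sigma},x}$ equals $|\gamma|\,\overline{\Omega}^{+,\theta}_\gamma$: among the contributions $z^{k\sum_i\gamma_{0,i}v_i}\prod_i t_i^{k\gamma_{0,i}}$ only $k = |\gamma|$ yields this $t$-monomial, and wall functions of $\overline{\foD}^{\mathrm{st}}_{(Q,W)}$ of any other direction through $\theta$ push forward to monomials with a different $t$-part (two primitive vectors supported on $I$ with proportional $t$-parts coincide).

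\emph{GW side.} By Theorem~\ref{thm_hdtv}, $\log f_{\foD_{\mathbf{s},\Sigma},x} = \log f^{\mathrm{in}}_{\foD_{\mathbf{s},\Sigma},x} + \log f^{\mathrm{out}}_{\foD_{\mathbf{s},\Sigma},x}$, and every monomial of $\log f^{\mathrm{in}}_{\foD_{\mathbf{s},\Sigma},x}$ involves only $z^{v_i}$ and $t_i$ for a single $i \in I$ (for general $x$ the incoming walls through $x$ all lie in one hyperplane $e_i^\perp$). As the $I$-support of $\gamma$ has at least two elements, the monomial $z^{\sum_i\gamma_i v_i}\prod_i t_i^{\gamma_i}$ does not occur in $\log f^{\mathrm{in}}_{\foD_{\mathbf{s},\Sigma},x}$, so its coefficient in $\log f_{\foD_{\mathbf{s},\Sigma},x}$ equals its coefficient in $\log f^{\mathrm{out}}_{\foD_{\mathbf{s},\Sigma},x}$, which by the formula of Theorem~\ref{thm_hdtv} is $\sum_{\tau\in\mathcal{T}^x_{\mathbf{A}}}k_\tau N^{(X,D)}_{\tau,\beta^x_{\mathbf{A}}}$ summed over those $\mathbf{A} = (a_i)\in\mathcal{A}_x$ with $\prod_i t_i^{a_i} = \prod_i t_i^{\gamma_i}$, i.e.\ over $\mathbf{A} = \gamma$ alone. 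Here $\gamma \in \mathcal{A}_x$ because the wall $\fod_x$ of $\foD_{\mathbf{s},\Sigma}$ through $x$ is the $(\psi^\vee)$-preimage of a wall of $\overline{\foD}^{\mathrm{st}}_{(Q,W)}$ whose $\psi$-image direction is proportional to $\psi(\gamma_0)$, so $W_x = \psi(\gamma_0)^\perp$, and $m_\gamma = -\iota_{\psi(\gamma)}\omega \in \psi(\gamma_0)^\perp$ since $\langle \iota_{\psi(\gamma)}\omega,\, \psi(\gamma)\rangle = \omega(\psi(\gamma),\psi(\gamma)) = 0$. Hence this coefficient equals $\sum_{\tau\in\mathcal{T}^x_\gamma}k_\tau N^{(X,D)}_{\tau,\beta^x_\gamma}$.

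Equating the two expressions for the coefficient of $z^{\sum_i\gamma_i v_i}\prod_i t_i^{\gamma_i}$ in $\log f_{\foD_{\mathbf{s},\Sigma},x}$ gives $|\gamma|\,\overline{\Omega}^{+,\theta}_\gamma = \sum_{\tau\in\mathcal{T}^x_\gamma}k_\tau N^{(X,D)}_{\tau,\beta^x_\gamma}$, which is \eqref{eq_main} (if $x \notin \Supp(\foD_{\mathbf{s},\Sigma})$ both sides vanish). All the real content is in Theorems~\ref{thm_dt_cluster}, \ref{thm_comparison_scattering} and \ref{thm_hdtv}; the residual work is the coefficient bookkeeping above plus a general-position analysis of $x$ --- that it lies on a unique wall $\fod_x \subset \psi(\gamma_0)^\perp$ of $\foD_{\mathbf{s},\Sigma}$ and that no incoming wall through $x$ contributes the monomial $z^{\sum_i\gamma_i v_i}\prod_i t_i^{\gamma_i}$. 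I expect this genericity step, though routine, to be the only real difficulty.
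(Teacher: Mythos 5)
Your proof is correct and follows essentially the same route as the paper's: reduce to the scattering-diagram identity $(\psi^\vee)^\star \overline{\foD}^{\mathrm{st}}_{(Q,W)} = \foD_{\mathbf{s},\Sigma}$ of Theorem~\ref{thm_comparison_scattering}, identify $f_{\foD_{\mathbf{s},\Sigma},x}$ via Theorem~\ref{thm_hdtv}, and extract the coefficient of $z^{\sum_i\gamma_i v_i}\prod_i t_i^{\gamma_i}$, using $\gamma\notin\Z_{\geq 1}s_i$ to rule out the incoming-wall contribution. Your write-up is if anything a bit more careful than the paper's on two bookkeeping points that the paper leaves implicit --- why the $t$-monomial singles out the $\gamma_0$-direction even when other (non-central) wall directions pass through $\theta$ (since $\theta$ is only assumed general in the subspace $\psi^\vee(M_\RR)\cap\gamma^\perp$), and the verification that $\gamma\in\mathcal{A}_x$ --- but the structure and the theorems invoked are identical.
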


\begin{proof}
Denote $\gamma_0 = \frac{\gamma}{|\gamma|}$.
By \eqref{eq_stab_scattering}, we have 
\[ f_{\overline{\foD}^{\mathrm{st}}_{(Q,W)}, \theta} = \exp \left( \sum_{\gamma' \in \Z_{\geq 0} \gamma_0} |\gamma'| \,\, \overline{\Omega}_{\gamma'}^{+,\theta} z^{\gamma'} \right) \,.\]
Applying the definition \eqref{eq_psi_f} of $(\psi^\vee)^\star$, we obtain
\[ (\psi^\vee)^\star f_{\overline{\foD}^{\mathrm{st}}_{(Q,W)}, \theta}
=\exp \left( \sum_{\gamma'=(\gamma'_i)_{i\in I} \in \Z_{\geq 1} \gamma_0} |\gamma'|\,\, \overline{\Omega}_{\gamma'}^{+,\theta} z^{\iota_{\gamma'} \omega} \prod_{i \in I} t_i^{\gamma'_i} \right)\,.
\]
Using that $\gamma'=(\gamma'_i)_{i\in I}=\sum_{i\in I}\gamma'_i s_i$, we have $\iota_{\gamma'} \omega=\sum_{i\in I} \gamma'_i v_i$ and so 
\begin{equation} \label{eq_proof_1}
(\psi^\vee)^\star f_{\overline{\foD}^{\mathrm{st}}_{(Q,W)}, \theta}
=\exp \left( \sum_{\gamma'=(\gamma'_i)_{i\in I} \in \Z_{\geq 1} \gamma'_0} |\gamma'|\,\, \overline{\Omega}_{\gamma'}^{+,\theta} z^{\sum_{i\in I} \gamma'_i v_i} \prod_{i \in I} t_i^{\gamma'_i} \right)\,.
\end{equation}
By Theorem \ref{thm_comparison_scattering}, we have  $(\psi^\vee)^\star \overline{\foD}_{(Q,W)}^{\mathrm{st}}= \foD_{\mathbf{s}}$ and so 
\[(\psi^\vee)^\star f_{\overline{\foD}^{\mathrm{st}}_{(Q,W)}, \theta} =f_{\foD_{\mathbf{s}}, x}= f_{\foD_{\mathbf{s}}, x}^{\mathrm{in}} f_{\foD_{\mathbf{s}}, x}^{\mathrm{out}}\,. \]
Combining \eqref{eq_proof_1} with Theorem \ref{thm_hdtv}, we obtain
\begin{align} &\sum_{\gamma'=(\gamma'_i)_{i\in I} \in \Z_{\geq 1} \gamma_0} |\gamma'|\,\, \overline{\Omega}_{\gamma'}^{+,\theta} z^{\sum_{i\in I} \gamma_i' v_i} \prod_{i \in I} t_i^{\gamma'_i} \\&=\sum_{\substack{i\in I, \rho \in \Sigma_i\\ x\in \rho}} \sum_{\ell \geq 1} k_\rho N_{\rho, \ell[E_i]}^{(X,D)} z^{\ell v_i} t_i^{\ell [E_i]}
+ 
\sum_{\mathbf{A}=(a_i)_i \in \mathcal{A}_x}
\sum_{\tau \in \mathcal{T}_{\mathbf{A}}^x} 
k_\tau
N_{\tau,\beta_{\mathbf{A}}^x}^{(X,D)}  z^{\sum_{i\in I} a_i v_i} \prod_{i \in I} t_i^{a_i}    \nonumber
\end{align}
It remains to isolate on both sides the terms proportional $z^{\sum_{i\in I} \gamma_i v_i} \prod_{i \in I} t_i^{\gamma_i}$. As we are assuming that $\gamma \notin \Z_{\geq 1}s_i$ for all $i\in I$, it follows that $z^{\sum_{i\in I} \gamma_i v_i} \prod_{i \in I} t_i^{\gamma_i}$ is not equal to $z^{\ell v_i} t_i^{\ell [E_i]}$ for any $i\in I$ and $\ell \geq 1$. Hence, we conclude that 
\[ |\gamma|\,\, \overline{\Omega}_\gamma^{+,\theta} 
= \sum_{\tau \in \mathcal{T}_{\gamma}^x} 
k_\tau
N_{\tau,\beta_{\gamma}^x}^{(X,D)}\,,\]
and so \eqref{eq_main} follows.
\end{proof}

\section{Examples: local $\PP^2$ and cubic surfaces}

In this section, we give two examples to illustrate Theorem \ref{thm_main}.

\subsection{The local projective plane}
\label{sec_ex_local}

Let $Q$ be the $3$-node quiver in Figure \ref{Fig:3quiver}. The skew-symmetric form $\omega_Q$ on $N_Q=\Z s_1 \oplus \Z s_2 \oplus \Z s_3$ satisfies \[\omega_Q(s_1,s_2)=\omega_Q(s_2,s_3)=\omega_Q(s_3, s_1)=3\,.\] 
Moreover, $\omega_Q$ is of rank two and its kernel is given by $\ker \omega_Q =\Z (s_1+s_2+s_3)$. 

\begin{figure}[h]
\center{\includegraphics{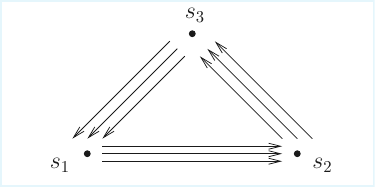}}
\caption{The quiver attached to the local projective plane.}
\label{Fig:3quiver}
\end{figure}

Let $\mathbf{s}=(N, (e_i)_{1\leq i\leq 3}, \omega)$ be the symplectic seed defined by $N=\Z^2$, 
\[ e_1=(1,1)\,, e_2=(-2,1)\,, e_3=(1,-2)\,,\] and $\omega(-,-)=\det(-,-)$.
Then, the map $\psi: N_Q \rightarrow N$ defined by $\psi(s_i)=e_i$ for all $1\leq i \leq 3$ is a compatibility data in the sense of Definition \ref{def_compatibility}. 
To construct the corresponding cluster variety, note that $\omega$ is non-degenerate, and so we can use it to identify $M$ with $N$ and $v_i=\iota_{e_i} \omega$ with $e_i$. Therefore, we consider the fan $\Sigma$ in $\RR^2$ consisting of the three rays $\RR_{\geq 0}e_1$, 
$\RR_{\geq 0}e_2$, $\RR_{\geq 0}e_3$. The corresponding toric surface $X_\Sigma$ has three toric $A_2$ cyclic quotient singularities, which can be resolved torically if one insists on having $X_\Sigma$ smooth and projective as in \S \ref{sec:cluster}. Let $X$ be the surface obtained by blowing-up a point on each of the toric divisors of $X_\Sigma$ corresponding to the rays $\RR_{\geq 0}e_1$, $\RR_{\geq 0}e_2$, $\RR_{\geq 0}e_3$, and $D$ be the strict transform of the toric boundary divisor of $X_\Sigma$. Then, $U=X \setminus D$ is the cluster variety defined by the symplectic seed $\mathbf{s}$, and $(X,D)$ is a log Calabi--Yau compactification of $U$. As $\coker \psi \simeq \Z/3\Z$, the holomorphic symplectic cluster variety $U$ is a $\Z/3\Z$-quotient of a symplectic fiber of the Poisson $\mathcal{X}$ cluster variety defined by $Q$.

Then, as reviewed for example in 
\cite{bousseau2022bps}, there exists a potential $W$ on $Q$ and an equivalence of triangulated categories 
\begin{equation} \label{eq_equiv}
\Phi: D^b \mathrm{Rep}(Q,W) \xlongrightarrow{\sim} D^b \mathrm{Coh}(K_{\PP^2})\end{equation}
between the derived category of representations of $(Q,W)$ and the derived category of coherent sheaves on the non-compact toric Calabi--Yau 3-fold given by the local projective plane $K_{\PP^2}=\mathcal{O}_{\PP^2}(-3)$, sending the three simple representations of $Q$ to the spherical objects $E_1 = \iota_\star \mathcal{O}[-1]$, $E_2 = \iota_\star(\Omega_{\PP^2}(1))$, $E_3 = \iota_\star(\mathcal{O}(-1))[1]$ in $D^b \mathrm{Coh}(K_{\PP^2})$, where $\iota: \PP^2 \hookrightarrow K_{\PP^2}$ is the inclusion of the zero section.

By \cite[Thm 1]{bousseau2022bps}, $(Q,W)$ has trivial attractor DT invariants, and so Theorem \ref{thm_main} applies. 
Theorem \ref{thm_main} computes $\overline{\Omega}_\gamma^{+,\theta}$ for $\theta \in M_\RR \subset M_{Q,\RR}$, that is, for $\theta=(\theta_1,\theta_2,\theta_3)$ such that $\theta_1+\theta_2+\theta_3=0$. For every $\gamma \in N_Q \setminus \ker \omega_Q$ such that $\gamma \notin \Z_{\geq 1}s_i$ for all $1\leq i \leq 3$, the intersection $\gamma^\perp \cap M_{\RR}$ is a line separated into two chambers: the attractor chamber $\RR_{\geq 0} \iota_\gamma \omega_Q$, and the anti-attractor chamber $-\RR_{\geq 0} \iota_\gamma \omega_Q$. If $\theta$ is in the attractor chamber, then the set of wall types $\mathcal{T}_\gamma^\theta$ is empty, and so, by Theorem \ref{thm_main}, we have
$\overline{\Omega}_\gamma^\theta=0$. If $\theta$ is in the anti-attractor chamber, then the set $\mathcal{T}_\gamma^\theta$ contains a single wall type $\tau_\gamma^\theta$, given by the half-line coming out of the origin and passing by $\Upsilon(\theta)$ in the tropicalization $B$ of $(X,D)$ -- see \cite[Ex 3.14]{gross2021canonical}. Moreover, we have $k_\tau=|\iota_{\psi(\gamma)} \omega|$ -- 
Therefore, by Theorem \ref{thm_main}, we have 
\begin{equation}\label{eq_01}
\overline{\Omega}_\gamma^{+,\theta} =\frac{|\iota_{\psi(\gamma)} \omega|}{|\gamma|} 
N_{\tau_\gamma^\theta, \beta_\gamma^\theta}^{(X,D)} \,.
\end{equation}

Using the equivalence \eqref{eq_equiv}, one can deduce a result comparing geometric DT invariants of $K_{\PP^2}$
and punctured Gromov--Witten invariants of $(X,D)$. For every $v=(r,d,\chi)\in \Z^3$, one can define a geometric DT invariant $\overline{\Omega}_v^{+}$ using the moduli space of Gieseker semistable coherent sheaves on $\PP^2$ of rank $r$, degree $d$, and Euler characteristic $\gamma$ -- see for example \cite{bousseau2022scattering}. 
By \cite[(5.4)]{bousseau2022bps}, under the equivalence 
\eqref{eq_equiv}, a coherent sheaf of class $v$ is mapped to an object of $D^b \mathrm{Rep}(Q,W)$ of dimension vector 
\[ \gamma(v):=(-\chi, r+d-\chi, r+2d-\chi)\,.\]
While there are in general walls in the space of Bridgeland stability conditions on $D^b \mathrm{Coh}(K_{\PP^2})$ separating the quiver DT invariants $\overline{\Omega}_{\gamma(v)}^{+,\theta}$ and the geometric DT $\overline{\Omega}_v^+$, these walls are absent for \emph{normalized} coherent sheaves, that is, with slope $\mu=\frac{d}{r}$ satisfying $-1<\mu\leq 0$:
we have 
\begin{equation} \label{eq_02}
\overline{\Omega}_{v}^{+} = \overline{\Omega}_{\gamma(v)}^{+,\theta} 
\end{equation}
for $\theta$ in the anti-attractor chamber $-\RR_{\geq 0} \iota_{\gamma(v)} \omega_Q$.
Indeed, in this case, moduli spaces of Gieseker semistable sheaves coincide with moduli spaces of $\theta$-semistable quiver representations by  
\cite[Prop 2.3]{DrP} and using \cite[\S 5.2, proof of Thm 1]{bousseau2022bps} to compare representations of $Q$ with representations of the Beilinson quiver obtained by removing from $Q$ all the three arrows between a given pair of vertices. A pictorial proof can also be obtained by looking in \cite[Figure 10]{bousseau2022bps} at the scattering diagram $\mathcal{D}_\psi$ defined in \cite{bousseau2022bps} for the phase $\psi=\frac{\pi}{2}$: the rays of $\mathcal{D}_0$ going out from the orbifold point, where the quiver description is valid, go directly to the large volume region, where the geometric description is valid, without any further scattering.
Therefore, we obtain the following result.

\begin{theorem} \label{thm_localP2}
For every $v=(r,d,\chi)\in \Z^3$ with $\mu:=\frac{d}{r}$ satisfying $-1<\mu \leq 0$, we have the following correspondence between geometric DT invariants $\overline{\Omega}_{v}^{+}$ of the local projective plane $K_{\PP^2}$ and the punctured Gromov--Witten invariants $N_{\tau,\beta}^{(X,D)}$ of the log Calabi--Yau surface $(X,D)$:
\[ \overline{\Omega}_{v}^{+} = \frac{|\iota_{\psi(\gamma(v))} \omega|}{|\gamma(v)|} 
N_{\tau_{\gamma(v)}^\theta, \beta_{\gamma(v)}^\theta}^{(X,D)} \,,\]
where $\theta$ is a point in the anti-attractor chamber $-\RR_{\geq 0} \iota_{\gamma(v)} \omega_Q$.
\end{theorem}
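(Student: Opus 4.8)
The plan is to deduce the statement by chaining two identities: the DT/punctured GW correspondence of Theorem \ref{thm_main}, applied to the quiver with potential $(Q,W)$ attached to $K_{\PP^2}$ and specialized to the rank-two setting, together with a comparison between the geometric DT invariants of $K_{\PP^2}$ and the quiver DT invariants of $(Q,W)$ that holds precisely in the normalized range $-1<\mu\leq 0$.

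\emph{Step 1: the rank-two form of Theorem \ref{thm_main}.} By \cite[Thm 1]{bousseau2022bps} the pair $(Q,W)$ has trivial attractor DT invariants, so Theorem \ref{thm_main} applies to every $\gamma\in N_Q\setminus\ker\omega_Q$ with $\gamma\notin\Z_{\geq 1}s_i$. Since $\omega_Q$ has rank two, the line $\gamma^{\perp}\cap M_\RR$ is split by the origin into the attractor chamber $\RR_{\geq 0}\iota_\gamma\omega_Q$ and the anti-attractor chamber $-\RR_{\geq 0}\iota_\gamma\omega_Q$, and for $\theta$ in the latter the set $\mathcal{T}_\gamma^\theta$ consists of the single wall type $\tau_\gamma^\theta$ given by the ray out of the origin through $\Upsilon(\theta)$ in the tropicalization $B$ of $(X,D)$, cf.\ \cite[Ex 3.14]{gross2021canonical}, with coefficient $k_{\tau_\gamma^\theta}=|\iota_{\psi(\gamma)}\omega|$. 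Substituting this into \eqref{eq_main} gives
\[ \overline{\Omega}_\gamma^{+,\theta}=\frac{|\iota_{\psi(\gamma)}\omega|}{|\gamma|}\,N_{\tau_\gamma^\theta,\beta_\gamma^\theta}^{(X,D)}\,. \]

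\emph{Step 2: geometric DT equals quiver DT for normalized sheaves.} Using the derived equivalence \eqref{eq_equiv}, a sheaf on $\PP^2$ of class $v=(r,d,\chi)$ corresponds to an object of dimension vector $\gamma(v)=(-\chi, r+d-\chi, r+2d-\chi)$ by \cite[(5.4)]{bousseau2022bps}. I would then show that for $-1<\mu\leq 0$ and $\theta$ in the anti-attractor chamber $-\RR_{\geq 0}\iota_{\gamma(v)}\omega_Q$ no wall in the space of Bridgeland stability conditions on $D^b\mathrm{Coh}(K_{\PP^2})$ separates the two invariants, so that $\overline{\Omega}_v^{+}=\overline{\Omega}_{\gamma(v)}^{+,\theta}$. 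Concretely, after replacing $Q$ by the Beilinson quiver obtained by deleting the arrows between one pair of vertices as in \cite[\S 5.2]{bousseau2022bps}, Gieseker stability of normalized sheaves coincides with King $\theta$-stability by \cite[Prop 2.3]{DrP}, which yields an identification of moduli stacks compatible with the vanishing-cycle data entering \eqref{eq_dt_def}; alternatively, in the scattering diagram $\mathcal{D}_0$ of \cite{bousseau2022bps} the rays emanating from the orbifold point, where the quiver description is valid, reach the large-volume region, where the geometric description is valid, without any intervening scattering.

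Combining Step 2 with Step 1 for $\gamma=\gamma(v)$ then yields the asserted formula; the few remaining classes $v$ for which $\gamma(v)$ is a positive multiple of some $s_i$ are handled directly using $|v_i|=1$ and the initial wall types $\tau_\rho$. I expect the one genuine difficulty to be Step 2: one must verify not merely that the two notions of stability single out the same objects for normalized $v$, but that the induced isomorphism of moduli spaces is compatible with the intersection-cohomology and vanishing-cycle sheaves appearing in \eqref{eq_dt_def}, and that the bound $-1<\mu\leq 0$ genuinely avoids every relevant wall. Step 1, by contrast, is a routine specialization of Theorem \ref{thm_main} together with the rank-two descriptions of $\mathcal{T}_\gamma^x$ and $k_\tau$ recalled above.
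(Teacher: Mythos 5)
Your proposal follows essentially the same two-step structure as the paper's own proof: the paper derives \eqref{eq_01} (your Step 1) and \eqref{eq_02} (your Step 2) in the discussion immediately preceding the theorem, and the proof in the paper is then the single sentence ``The result follows by combining \eqref{eq_01} and \eqref{eq_02}.'' Your Step 1 is exactly the paper's specialization of Theorem~\ref{thm_main} to the rank-two case, with the same identification of the unique wall type $\tau_\gamma^\theta$ and the coefficient $k_\tau = |\iota_{\psi(\gamma)}\omega|$, and your Step 2 uses the same two ingredients the paper cites: the identification of moduli spaces for normalized classes via \cite[Prop~2.3]{DrP} together with \cite[\S 5.2]{bousseau2022bps}, and the scattering-diagram picture from $\mathcal{D}_0$ showing no intervening wall-crossing between the orbifold and large-volume regions. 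Your concern at the end of Step 2 --- that the identification must be compatible with the vanishing-cycle sheaf data, not merely with semistable objects --- is a reasonable thing to flag, and the paper handles this by delegating to the cited sources rather than arguing it from scratch. You also explicitly notice the edge case $\gamma(v)\in\Z_{\geq 1}s_i$, which falls outside the hypotheses of Theorem~\ref{thm_main} (and of \eqref{eq_01}); the paper's one-line proof passes over this silently, so your remark that those few classes should be handled directly via the initial walls is if anything slightly more careful than the paper. In short: same proof, presented with a bit more scaffolding.
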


\begin{proof}
The result follows by combining \eqref{eq_01} and \eqref{eq_02}.
\end{proof}

\begin{remark}
One can check that the classes $v$ such that $\Omega_v^+=1$ are exactly the classes of the exceptional vector bundles on $\PP^2$, and that the corresponding curve classes $\beta^\theta_{\gamma(v)}$ on $(X,D)$ are exactly the curve classes coming from exceptional curves of toric models of $(X,D)$. Moreover, under the correspondence $v \mapsto \beta^\theta_{\gamma(v)}$, the mutations of exceptional vector bundles on $\PP^2$ \cite{DrP} correspond to the mutations of the toric models of $(X,D)$ \cite{GHKbirational}. 
\end{remark}

Theorem \ref{thm_localP2} is compatible with the heuristic picture of \cite[\S 7]{Bp2} describing DT invariants of  $K_{\PP^2}$ in terms of holomorphic curves via mirror symmetry and hyperk\"ahler rotation. 
For every phase $\psi \in \RR/2\pi\Z$, the DT invariants of $K_{\PP^2}$ counting stable objects of the derived category supported on the zero section, for stability conditions parametrized by the stringy K\"ahler moduli space and with central charge of phase $\psi +\frac{\pi}{2}$, are expected to be related to counts of $J_\psi$-holomorphic curves in a non-compact hyperk\"ahler manifold $(\mathcal{U},I,J,K)$, where $J_\psi$ is the complex structure $J_\psi:= (\cos \psi) J +(\sin \psi) K$.
For every phase $\psi$, the DT invariants of $K_{\PP^2}$ with central charge of phase $\psi+\frac{\pi}{2}$ are captured by the scattering diagram $\foD_\psi$ on the stringy K\"ahler moduli space defined in \cite{bousseau2022bps}, and one expects this scattering diagram to give a tropical description of $J_\psi$-holomorphic curves in $\mathcal{U}$.
Theorem \ref{thm_localP2} establishes this correspondence for $\psi=\frac{\pi}{2}$. Indeed, when the surface $X$ is obtained by blowing up three points whose sum is linearly equivalent to zero on the toric boundary divisor of $X_\Sigma$,
the holomorphic symplectic cluster variety $U$ is exactly the mirror of $(\PP^2,E)$, where $E$ is a smooth elliptic curve, and so, by \cite[\S 7.1]{Bp2}, is isomorphic to the complex manifold $(\mathcal{U}, J_{\frac{\pi}{2}})$.
Finally, note that the correspondence for $\psi=0$ has been established in \cite{Bp2}: in this case, the complex manifold $(\mathcal{U}, J_0)$ is isomorphic to the complement $\PP^2 \setminus E$ of a smooth elliptic curve $E$ in $\PP^2$. While Theorem \ref{thm_localP2} gives a description of DT counts of \emph{normalized} Gieseker semistable sheaves in terms of holomorphic curves in $(\mathcal{U}, J_{\frac{\pi}{2}})$, \cite{Bp2} gives a description of DT counts of arbitrary Gieseker semistable sheaves in terms of holomorphic curves in $(\mathcal{U}, J_{0})$. In particular, DT counts of \emph{normalized} Gieseker semistable sheaves admit two completely different descriptions in terms of holomorphic curves in the two very different complex manifolds $(\mathcal{U}, J_{\frac{\pi}{2}})$ and $(\mathcal{U}, J_0)$.

\begin{remark}
Using \cite{bousseau2020quantum}, one can refine Theorem \ref{thm_localP2} to a refined DT/higher genus Gromov--Witten correspondence.
\end{remark}

\subsection{Cubic surfaces} 
\label{sec_ex_cubic}

Let $Q$ be the octahedral quiver in Figure \ref{Fig:octaquiver}. This quiver is mutation equivalent to the elliptic Dynkin $D_4^{(1,1)}$ quiver.
Let $\mathbf{s}=(N,(e_i)_{1\leq i \leq 6}, \omega)$ be the symplectic  seed defined by $N=\Z^2$, 
\[ e_1=e_2=(1,0)\,, e_3=e_4=(0,1)\,, e_5=e_6=(-1,-1) \,,\]
and $\omega(-,-)=\det(-,-)$. Then, one checks that the map $\psi: N_Q \rightarrow N$ defined by $\psi(s_i)=e_i$ for all $1\leq i \leq 6$ is a compatibility data in the sense of Definition \ref{def_compatibility}.
In particular, this shows that $\omega_Q$ is of rank two.

\begin{figure}[h]
\center{\includegraphics{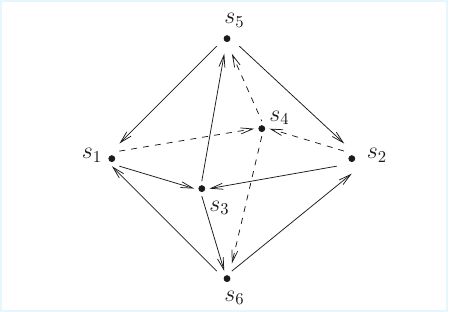}}
\caption{The quiver whose cluster variety is the cubic surface.}
\label{Fig:octaquiver}
\end{figure}

A log Calabi--Yau compactification $(X,D)$ of the corresponding cluster variety is obtained by blowing-up two general points on each of the the three toric divisors of $\PP^2$. In particular, $X$ is a cubic surface in $\PP^3$, $D$ is an anticanonical triangle of lines on $X$, and the holomorphic symplectic cluster surface $U=X \setminus D$ is an affine cubic surface. As the map $\psi$ is surjective, it follows that $U$ is isomorphic to a symplectic fiber of the Poisson $\mathcal{X}$ cluster variety defined by $Q$.

The quiver $Q$ can be obtained from an ideal triangulation of the 4-punctured sphere -- see for example \cite[Fig 9]{BY}.
In particular, as reviewed in Example \ref{ex_1}, it admits a quiver with potential $W$ \cite{LF} such that $(Q,W)$ has trivial attractor
DT invariants, and so we can apply Theorem \ref{thm_main}. As in \S \ref{sec_ex_local}, for every $\gamma \in N_Q \setminus \ker \omega_Q$ such that $\gamma \notin \Z_{\geq 1}s_i$ for all $1\leq i \leq 6$, the intersection $\gamma^\perp \cap M_\RR$ is the union of the attractor chamber $\RR_{\geq 0} \iota_\gamma \omega_Q$ and of the anti-attractor chamber $-\RR_{\geq 0} \iota_\gamma \omega_Q$.
If $\theta$ is in the attractor chamber, then $\overline{\Omega}_\gamma^\theta=0$. If $\theta$ is in the anti-attractor chamber, then $\mathcal{T}_\gamma^\theta$ contains a single wall $\tau_\gamma^\theta$, and so, by Theorem \ref{thm_main}, we have
\begin{equation} \label{eq_cubic}
\overline{\Omega}_\gamma^{+,\theta} = \frac{|\iota_{\psi(\gamma)}
\omega|}{|\gamma|} N_{\tau_{\gamma}^\theta, \beta_\gamma^\theta}^{(X,D)} \,.
\end{equation}
This example is particularly interesting because all the punctured Gromov--Witten invariants $N_{\tau,\beta}^{(X,D)}$ have been explicitly computed in \cite{GHKScubic} -- see also \cite{bousseau2023skein}. In particular, it follows that every non-zero DT invariant $\Omega_\gamma^{+,\theta}$ is equal to either one or two. 

Moreover, the collection of these DT invariants has a physics interpretation as the BPS spectrum of the 4-dimensional $\mathcal{N}=2$ supersymmetric $SU(2)$ gauge theory with $N_f=4$ flavors. The quiver with potential $(Q,W)$ is the BPS quiver of this theory -- see for example \cite[\S 4.7]{alim2014bps} and \cite{cecotti2013bps} where the two mutated versions of $Q$ are respectively mentioned. The BPS quiver has a natural origin from the class $\mathcal{S}$ construction of the gauge theory by compactification of the 6-dimensional $\mathcal{N}=(2,0)$ $A_1$ superconformal field theory on a 4-punctured sphere
\cite[\S 10.7]{GMN}, whereas the description of BPS states in terms of punctured Gromov--Witten invariants of $(X,D)$ arises from the realization of the gauge theory on the worldvolume of a M5-brane wrapping a Lagrangian torus in $U=X\setminus D$, as reviewed in \cite[\S 1.3]{bousseau2023skein}.

\bibliographystyle{plain}
\bibliography{bibliography}

\end{document}